\documentclass[journal,twoside,web]{ieeecolor}

\usepackage{pifont}



\usepackage[T1]{fontenc}
\usepackage{generic}
\usepackage{amsmath,amssymb,bbm}

\usepackage{enumitem}
\usepackage{cancel}
\usepackage{graphicx}
\usepackage{xcolor}
\usepackage{diagbox}
\DeclareMathAlphabet{\mathbbold}{U}{bbold}{m}{n}

\usepackage{makecell}


\usepackage{algorithm}
\usepackage{algpseudocode}
\algrenewcommand\algorithmicrequire{\textbf{Input:}}
\algrenewcommand\algorithmicensure{\textbf{Output:}}

\renewcommand\algorithmicdo{}

\newcommand\NoDo{\renewcommand\algorithmicdo{}}

\usepackage{accents}

\definecolor{mygray}{rgb}{0.7, 0.7, 0.7}

\newcommand{\rea}{\mathbb{R}}

\newcommand{\reap}{\mathbb{R}_+}
\newcommand{\nat}{\mathbb{N}}

\renewcommand{\int}{\mathbb{I}}

\newcommand{\dsh}{d_{\textsc{sh}}}

\newcommand{\bone}{\mathbbold{1}}
\newcommand{\bzero}{\mathbbold{0}}
\renewcommand{\int}{\mathbb{Z}}


\newcommand{\A}{\mathcal{A}}
\newcommand{\C}{\mathcal{C}}
\newcommand{\D}{\mathcal{D}}
\newcommand{\R}{\mathcal{R}}
\newcommand{\I}{\mathcal{I}}
\newcommand{\J}{\mathcal{J}}
\newcommand{\Y}{\mathcal{Y}}

\newcommand{\G}{\mathcal{G}}
\renewcommand{\P}{\mathcal{P}}

\newcommand{\V}{\mathcal{V}}
\newcommand{\E}{\mathcal{E}}
\newcommand{\N}{\mathcal{N}}
\newcommand{\X}{\mathcal{X}}
\renewcommand{\S}{\mathcal{S}}

\newcommand{\Va}{\V^{\textsc{A}}}
\newcommand{\Vr}{\V^{\textsc{R}}}
\newcommand{\Vd}{\V^{\textsc{D}}}



\newcommand{\x}{\boldsymbol{x}}
\newcommand{\y}{\boldsymbol{y}}

\newcommand{\Ts}{\mathsf{T}}

\newcommand{\Ps}{\mathsf{P}}
\newcommand{\Fs}{\mathsf{F}}

\DeclareMathOperator{\prox}{prox}
\DeclareMathOperator{\proj}{proj}

\DeclareMathOperator{\avg}{avg}
\DeclareMathOperator{\med}{med}

\DeclareMathOperator*{\arginf}{arginf}

\DeclareMathOperator*{\blkdiag}{blkdiag}

\renewcommand{\bar}[1]{\overline{#1}}
\newcommand{\ubar}[1]{\underline{#1}}

\newcommand{\alg}{Open ADMM\xspace}

\newcommand{\join}{^\text{join}}
\newcommand{\leave}{^\text{leave}}

\newcommand{\norm}[1]{{\left\vert\kern-0.25ex\left\vert #1\right\vert\kern-0.25ex\right\vert}}
\newcommand{\mnorm}[1]{{\left\vert\kern-0.3ex\left\vert\kern-0.3ex\left\vert#1\right\vert\kern-0.3ex\right\vert\kern-0.3ex\right\vert}}
\newcommand{\abs}[1]{{\left\vert #1 \right\vert}}
\newcommand{\norminf}[1]{{\left\vert\kern-0.25ex\left\vert #1\right\vert\kern-0.25ex\right\vert}_{ \infty}}

\usepackage{xspace}

\newcommand{\virg}[1]{``#1''}

\usepackage{ntheorem,lipsum}
\theoremheaderfont{\hspace*{\parindent}\bf}
\theoremseparator{.}
\newtheorem{assum}{Assumption}
\newtheorem{thm}{Theorem}
\newtheorem{rem}{Remark}
\newtheorem{prop}{Proposition}
\newtheorem{cor}{Corollary}
\newtheorem{lem}{Lemma}
\newtheorem{defn}{Definition}
\newtheorem{exmp}{Example}

\newcommand{\cmark}{{\color{green!80!black}\ding{51}\xspace}}
\newcommand{\xmark}{{\color{red}\ding{55}\xspace}}
\newcommand{\mmark}{{\color{orange}\ding{91}\xspace}}


\makeatletter
\let\c@author\relax
\makeatother
\usepackage[citestyle=numeric-comp,
	 bibstyle=ieee,
	 giveninits=true,
      maxbibnames=99,
	 ]{biblatex}

\addbibresource{main.bib}
\AtEveryBibitem{
	\clearfield{issn}
	\clearfield{isbn}
	\clearfield{archivePrefix}
	\clearfield{arxivId}
	\clearfield{pmid}
	\clearfield{doi}
	\clearfield{eprint}
	\clearfield{note}
	\clearfield{month}
	\ifentrytype{online}{}{\ifentrytype{misc}{}{\clearfield{url}}}
	\ifentrytype{book}{\clearfield{doi}}{}
}

\markboth{\journalname, VOL. XX, NO. XX, XXXX 2025}
{D. Deplano \MakeLowercase{\textit{et al.}}: Optimization and Learning in Open Multi-Agent Systems}


\usepackage[shortcuts]{extdash}
\begin{document}

\title{\fontsize{22pt}{28pt}\selectfont Optimization and Learning \\in Open Multi-Agent Systems}
\author{Diego Deplano, \IEEEmembership{Member, IEEE}, Nicola Bastianello, \IEEEmembership{Member, IEEE}\\
Mauro Franceschelli, \IEEEmembership{Senior, IEEE}, and Karl H. Johansson, \IEEEmembership{Fellow, IEEE}
\thanks{The work of D. Deplano was supported by the project e.INS- Ecosystem of Innovation for Next Generation Sardinia (cod. ECS 00000038) funded by the Italian Ministry for Research and Education (MUR) under the National Recovery and Resilience Plan (NRRP) - MISSION 4 COMPONENT 2, \virg{From research to business} INVESTMENT 1.5, \virg{Creation and strengthening of Ecosystems of innovation} and construction of \virg{Territorial R\& D Leaders}.}
\thanks{The work of N. Bastianello and K. H. Johansson was partially supported by the European Union’s Horizon Research and Innovation Actions programme under grant agreement No. 101070162, and partially by Swedish Research Council Distinguished Professor Grant 2017-01078 Knut and Alice Wallenberg Foundation Wallenberg Scholar Grant.}
\thanks{D. Deplano and M. Franceschelli is with DIEE, University of Cagliari, 09123 Cagliari, Italy.
Emails: {\tt \{diego.deplano,mauro.franceschelli\}@unica.it}}
\thanks{Nicola Bastianello and Karl H. Johansson are with the School of Electrical Engineering and Computer Science and Digital Futures, KTH Royal Institute of Technology, Stockholm, Sweden.
Emails: {\tt \{nicolba,kallej\}@kth.se}}
}

\maketitle
\begin{abstract}
Modern artificial intelligence relies on networks of agents that collect data, process information, and exchange it with neighbors to collaboratively solve optimization and learning problems.
This article introduces a novel distributed algorithm to address a broad class of these problems in \virg{open networks}, where the number of participating agents may vary due to several factors, such as autonomous decisions, heterogeneous resource availability, or DoS attacks.
Extending the current literature, the convergence analysis of the proposed algorithm is based on the newly developed \virg{Theory of Open Operators}, which characterizes an operator as open when the set of components to be updated changes over time, yielding to time-varying operators acting on sequences of points of different dimensions and compositions.
The mathematical tools and convergence results developed here provide a general framework for evaluating distributed algorithms in open networks, allowing to characterize their performance in terms of the punctual distance from the optimal solution, in contrast with regret-based metrics that assess cumulative performance over a finite-time horizon.
As illustrative examples, the proposed algorithm is used to solve dynamic consensus or tracking problems on different metrics of interest, such as average, median, and min/max value, as well as classification problems with logistic loss functions.
\end{abstract}

\vspace{-1em}

\begin{IEEEkeywords}
Open Operator Theory, ADMM, Open Networks, Open Multiagent Systems, Distributed optimization, Distributed Learning, Dynamic Consensus.
\end{IEEEkeywords}

\vspace{-1em}

\section{Introduction}
Many real-world systems consists of multiple interacting agents in a network, where new agents may join (start interacting) and others may leave (stop interacting), forming what is called an \virg{\textit{open}} multi-agent systems:
Bitcoin miners compete in a network to gain a reward by adding a block of transactions to the blockchain, leaving after winning or when the profitability does not justify the costs \cite{nakamoto2008bitcoin};
mobile robots cooperate in a network to achieve global tasks such as move goods or perform inventory checks, leaving the network to charge their battery or due to malfunctioning \cite{brambilla2013swarm};
people participate in a network to share their thoughts and learn from others, leaving when they lose interest \cite{rainer2002opinion}.

\begin{table*}[!ht]
\centering

\caption{Comparison with the state of the art for distributed optimization and learning\\ in open multi-agent systems with discrete-time dynamics.}
\label{tab:comparison_optimization}
    \begin{tabular}{lcccccc}
    \hline
    [Ref.] & \thead{Algorithm} & \thead{Assumptions\\on the problem} & \thead{Assumptions\\on the network} & \thead{Time independent\\step sizes} & \thead{Convergence \\ metric} & \thead{Convergence\\ rate} \\
    \hline
    \thead[l]{\cite{hendrickx_stability_2020} Hendrickx \textit{et al.} \\\hspace{1.5em}(2020)} & DGD & \thead{Static + Smooth + \\ Strongly convex + \\Minimizers in a ball} & \thead{\xmark\\ Only replacement} & \cmark & \thead{Distance from \\ minimizers}& \thead{Linear\\ (inexact)} \\

    \thead[l]{\cite{hsieh_optimization_2021} Hsieh \textit{et al.} \\\hspace{1.5em}(2021)}& \thead{Dual\\averaging} & \thead{Static +\\ Lipschitz + Convex +\\Shared convex constraint set} & \thead{\mmark\\ Vertex-connected$^\dagger$\\(jointly)} & \xmark & Regret &  \thead{Sublinear\\(if the network's\\size is known)} \\

    \thead[l]{\cite{hayashi_distributed_2023} Hayashi \\\hspace{1.5em}(2023)} & Sub-gradient & \thead{Time-varying +\\ Lipschitz + Convex \\Shared compact constraint set} & \thead{\mmark\\ Vertex-connected$^\dagger$\\(jointly)} & \xmark & Regret & \thead{Sublinear\\(if the network's \\size is bounded)} \\

    \hline

    [\textbf{This work}] & ADMM & \thead{Time-varying + \\Semicontinuous + Convex\\Unconstrained} & \thead{\cmark\\ Connected} & \cmark & \thead{Distance from \\ minimizers} & \thead{Linear \\ (exact)}\\
    
    \hline
    \end{tabular}
\end{table*}

Despite observing this dynamic behavior in practice, traditional cooperation schemes, optimization algorithms, game strategies, and learning techniques often assume a static network with a fixed set of participants.
Under this simplified assumption, it is typically possible to prove the stability of these methods and characterize their performance in reaching equilibrium points, representing best-response strategies in games, optimal solutions in optimization tasks, or finest trained models in machine learning applications.
Such guarantees may lose their relevance in the context of open networks, because an equilibrium may never be reached and divergent behavior could arise due to the join/leave events which the network is subject.
Within the control community, a growing interest in the study of open networks is demonstrated by the high number of papers published within the last decade, where the major topics of interest are represented by distributed consensus protocols, both in continuous-time \cite{xue2022stability, zhou2024prescribed, restrepo2022consensus, dashti2019dynamic} and in discrete-time \cite{varma2018open, de2019lower, de2022fundamental, dashti2022distributed,hendrickx2016opendet,hendrickx2017openrand, franceschelli2018proportional, franceschelli2020stability, abdelrahim2017max, Deplano24stability, oliva2023sum, de2021random}, distributed optimization algorithms \cite{hsieh_optimization_2021, hsieh2022multi, hayashi_distributed_2023, sawamura_distributed_2024, hendrickx_stability_2020}, distributed resource allocation \cite{liu_distributed_2024, vizuete2022resource, galland_random_2024}, and learning problems \cite{bistritz2024gamekeeper, nakamura2023cooperative}.
The difficulty in designing distributed algorithms that can be deployed over open networks, and in providing formal performance guarantees, is mainly due to the lack of formal mathematical tools to analyze the dynamics of systems with a varying number of components.

To fill this gap, the \textbf{first contribution} of this manuscript is the formalization of fundamental concepts for what we call \virg{\textit{open operators}}, i.e., time-varying operators acting on sequences of points of different dimensions and compositions.
We extend usual concepts for standard operators \cite{Ryu16,Bauschke2017,} -- such as distance between points and sets, projections of points into sets, fixed points and convergence of the iteration toward fixed points -- to the set-up of open operators, along with illustrative examples.
Since exact convergence cannot be reached in general due to the varying nature of the set of fixed point, both in size and composition, this manuscript provide sufficient conditions for a more general class of contractive operators, ensuring the convergence of the iteration to a bounded distance from the fixed point set.

The \textbf{second contribution} of this manuscript is the presentation and characterization an open and distributed version of ADMM to solve optimization and learning problems in a distributed way over open networks. \let\thefootnote\relax\footnote{$^\dagger$A graph $\G_k=(\V_k,\E_k)$ is said to be jointly vertex connected if there exists $B,\kappa\geq 1$ such that at least $\kappa$ nodes need to be removed to disrupt the connectivity of the union graph $\big( \bigcup_{t = k-B}^k \mathcal{V}_t, \bigcup_{t = k-B}^k \mathcal{E}_t \big)$ for all $k\in\nat$.}
We call the proposed algorithm \virg{Open ADMM} which enjoys the following advantages if compared with other state-of-the-art algorithms (cfr.~Table \ref{tab:comparison_optimization}): 1) it accommodates arbitrary changes in the network, including unbounded growth, and requires no conditions beyond simple connectivity; 2) it never needs a centralized re-initialization procedure as the step sizes are neither vanishing or time-varying; 3) it works with time-varying cost functions that are only convex and not necessarily strongly convex; 4) it converges linearly to the set of minimizers, achieving exact convergence if the network composition and the local costs remains unchanged for sufficiently long time.
{Assuming all cost functions are static, smooth, strongly convex, and that their minimizers lie within a given ball, the authors of \cite{hendrickx_stability_2020} have shown that the Decentralized Gradient Descent (DGD) Algorithm remains stable in the constrained scenario where departing agents are immediately replaced by new arrivals.
Relaxing strong convexity to convexity and smoothness to Lipschitz continuity, the authors of \cite{hsieh_optimization_2021} proposed a dual averaging method and proved sublinear convergence of the running loss, provided the running ratio of the quadratic mean to the average number of active agents is bounded, which accommodates unbounded network growth.
Under the stronger assumption that the network's size is bounded from above, the author of \cite{hayashi_distributed_2023} introduced a subgradient approach and proved sublinear convergence of the running loss while accounting with time-varying local loss functions.
Unlike this manuscript and \cite{hendrickx_stability_2020}, these works analyze performance using a regret-based metric and require the network to be (jointly) vertex-connected$^\dagger$.
Instead, this work only assumes the network to be connected and characterizes the punctual distance of the local estimates from the global optimum, making the performance evaluation less coarse and more informative than regret-based metrics.}

As a \textbf{third contribution}, this manuscript applies the proposed algorithm to solve dynamic consensus problems over open networks. Specifically, we derive closed-form updates for Open ADMM and establish sufficient conditions on the signals being tracked to ensure the correct tracking of three different metrics of interest: the maximum, the median, and the average of a set of signals, each of which is locally accessible to the agents.
Notably, for the median metric, this work introduces the first discrete-time protocol in the literature designed for open scenarios. Additionally, the proposed approach demonstrates superior performance for both the maximum and average metrics. We refer the interested reader to Section \ref{sec:simu_dc} and specifically to Remark \ref{rem:sota_dc} for a comparison of performance and working assumption between our algorithms and those in the state of the art.

As a \textbf{fourth contribution}, this manuscript applies the proposed algorithm to learning problems with logistic loss functions and provide thorough numerical simulations demonstrating its performance under various scenarios: (i) in networks with Poisson arrivals and departures, the distance from the optimal solution is proportional to the Poisson rates but remains bounded; (ii) in eventually closed networks, with decaying Poisson rates, the algorithm achieves exact convergence at steady state; (iii) in networks with only node replacements, larger networks exhibit higher robustness.
Additionally, alternative initialization methods for Open ADMM are explored to address cases where computing local optima is computationally expensive.

\textit{Structure of the manuscript:}
Section \ref{sec:OOT} introduces all relevant concepts at the root of open operator theory, generalizing the standard concepts of fixed points, sequences and their convergence, projections, and distances. 
Section \ref{sec:algorithm} formalize the problem of interest together with the working assumptions, then presents and characterize the proposed \alg algorithm.
Section \ref{sec:simu} exploits \alg to solve the tracking problem (or dynamic consensus) over the maximum, median, and average functions, and learning problems with logistic loss function.
Section \ref{sec:conclusion} concludes the paper by discussing some future research directions.


\section{Open Operator Theory:\\ iterative behavior of operators\\ between spaces of different dimension}\label{sec:OOT}

The set of real and integer numbers are denoted by $\rea$ and $\int$, respectively, while $\reap$ and $\nat$ denote their restriction to positive entries.
Matrices are denoted by uppercase letters, vectors and scalars by lowercase letters, while sets and spaces are denoted by uppercase calligraphic letters.
The identity matrix is denoted by $I_n$, $n\in\nat$, while the vectors of ones and zeros are denoted by $\bone_n$ and $\bzero_n$; subscripts are omitted if clear from the context.
{Letting $\I\neq \emptyset$ be a finite set of labels, we adopt the non-standard yet intuitive notation $\rea^\I$ to denote a vector space of finite dimension equal to the number of elements of $\I$, and by $x\in\rea^\I$ we denote a vector with labeled components $x_i\in\rea$ where each label $i\in\I$ corresponds to a label in $\I$.}
We limit our discussion to finite-dimensional Euclidean normed spaces $(\rea^\I,\norm{\cdot}_2)$ and denote the distance between two points ${x,y\in\rea^\I}$ with the same labeled components by ${d:\rea^\I\times \rea^\I\rightarrow \rea_+}$, given by ${d(x,y) = \norm{x- y}_2}$.
In turn, the distance of a point $x\in\rea^\I$ from a set $\X\subseteq\rea^\I$ and the distance between two sets $\X,\Y\subseteq \rea^\I$ are defined as follows
\begin{equation}\label{eq:dists}
    d(x,\Y) = \inf_{y\in\Y} \ d(x,y), \quad d(\X,\Y) = \inf_{x \in \X} \ d(x,\Y).
\end{equation}

In the remainder of this section, we introduce all relevant concepts about operators mapping spaces $\X\subseteq\rea^{\I_1}$ into spaces $\Y\subseteq\rea^{\I_2}$ where $\I_1$, $\I_2$ are set of labels with possibly different dimension and elements, we call them \virg{\textit{open}} operators.

\begin{figure}[!b]
    \centering
    \includegraphics[width=0.57\linewidth]{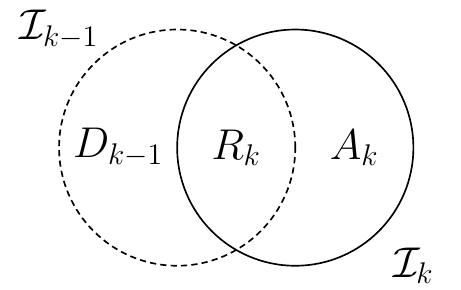}
    \caption{Venn diagram of labels at two consecutive steps $k-1$ and $k$.}
    \label{fig:venn}
\end{figure}

\subsection{Open-operators and sets of interest}

A time-varying \virg{\textit{open}} operator ${\Ts_k:\rea^{\I_{k-1}}\rightarrow \rea^{\I_k}}$ maps points with components labeled by $\I_{k-1}$ into points with components labeled by $\I_{k}$, which may have different size, yielding the following iteration:
\begin{equation}\label{eq:open_iter}
x_k = \Ts_k(x_{k-1}),\qquad x_k\in\rea^{\I_k}.    
\end{equation}
The sequence $\{x_k:k\in\nat\}$ generated by the iteration \eqref{eq:open_iter} is such that the points $x_k\in\rea^{\I_k}$ may have different dimensions for $k\in\nat$, and thus are called \virg{\textit{open sequences}}. According to Fig.~\ref{fig:venn}, we identify the following subsets:
\begin{itemize}
    \item \textit{Remaining labels} $\R_k=\I_{k}\cap \I_{k-1}$: labels present both at time $k-1$ and time $k$;
    \item \textit{Arriving labels} $\A_k=\I_{k}\setminus \I_{k-1}$: labels present at time $k$ but not at time $k-1$;
    \item \textit{Departing labels} $\D_k=\I_{k}\setminus\I_{k+1}$: labels present at time $k$ but not at time $k+1$.
\end{itemize}
By convention, $\I_{-1}=\emptyset$, yielding $\R_0=\emptyset$ and $\A_0=\I_0$.
Note that, in general, the set of departing labels may contain both remaining and arriving labels, which are instead disjoint:
$$
\D_k \subset \I_k = \R_k\cup \A_k,\qquad \R_k \cap \A_k = \emptyset.
$$
\begin{exmp}\label{exa:change_idx}
    Consider the sets of labels ${\I_0= \{a,b,c\}}$, $\I_1= \{a,b,c,d\}$, $\I_2= \{b,c,d\}$, $\I_3= \{e,f\}$. Then:
    \begingroup
    \medmuskip=2mu
    \thinmuskip=2mu
    \thickmuskip=2mu
    \begin{itemize}
        \item (at the initial step $0$) \hspace{0.2em} $D_{-1}=\emptyset$, $\A_0 = \{a,b,c\}$, $\R_0 = \emptyset$;
        \item (from step $0$ to step $1$) $D_{0}=\emptyset$, $\A_1 = \{d\}$, $\R_1 = \{a,b,c\}$;
        \item (from step $1$ to step $2$) $D_{1}=\{a\}$, $\A_2 = \emptyset$, $\R_2 = \{b,c,d\}$;
        \item (from step $2$ to step $3$) $D_{2}=\{b,c,d\}$, $\A_3 = \{e,f\}$, $\R_3 = \emptyset$.
    \end{itemize}
    \endgroup
    Accordingly, the points in the open sequence $\{x_0,x_1,x_2,x_3\}$ are described component-wise as follows:
    $$
    x_0 = \begin{bmatrix}
        x^a_0\\x^b_0\\x^c_0
    \end{bmatrix},\quad
    x_1 = \begin{bmatrix}
        x^a_1\\x^b_1\\x^c_1\\x^d_1
    \end{bmatrix},\quad
    x_2 = \begin{bmatrix}
        x^b_2\\x^c_2\\x^d_2
    \end{bmatrix},\quad
    x_3 = \begin{bmatrix}
        x^e_3\\x^f_3
    \end{bmatrix}.
    $$
\end{exmp}

Since $x_k$ takes values in $\rea^{\I_k}$ at all times $k$, components ${i\in \D_{k-1}}$ are simply left out from $x_{k}$. Instead, new components $i\in \A_k$ need to be initialized to some value $x^{\textsc{A},i}_k$ and remaining components $i\in \R^i_k$ need to be updated according to a scalar operator ${\Fs^i_k:\rea^{\I_{k-1}} \rightarrow \rea}$. 
Thus, the iteration in~\eqref{eq:open_iter} of the open operator $\Ts_k$ can be written component-wise for each $i\in \I_{k}$, as follows:
\begingroup
\medmuskip=2mu
\thinmuskip=2mu
\thickmuskip=2mu
\begin{equation}\label{eq:open_iter_cw}
x^{i}_k = \Ts^i_k(x_{k-1})=\begin{cases}
\Fs^i_k(x_{k-1}) & \text{if } i\in \R_k=\I_{k-1}\setminus \D_{k-1},\\
x^{\textsc{A},i}_k & \text{if } i\in \A_k=\I_k\setminus \I_{k-1}.
\end{cases}
\end{equation}
\endgroup
Note that if the components of the system remain unchanged, that is ${\I_{k}=\I_{k-1}}$, then the operator ${\Fs_k:\rea^{\I_{k-1}}\rightarrow \rea^{\I_{k-1}}}$ rules the so-called \virg{\textit{standard iteration}}:
\begingroup
\begin{equation}\label{eq:std_iter}
x_k=\Fs_k(x_{k-1}), \quad \text{when} \quad \I_k=\I_{k-1}.
\end{equation}
\endgroup

\begin{exmp}\label{exa:open_seq_lin}
    Consider the time-varying open operator as in~\eqref{eq:open_iter_cw} ruled by ${\Fs_k: x\mapsto F_k x}$ where
    $$
    F_k = \frac{1}{{\abs{\I_{k-1}}}}\bone_{\abs{\I_{k-1}}}\bone_{\abs{\I_{k-1}}}^\top.
    $$
    If the set of labels evolves as in Example~\ref{exa:change_idx}, then a possible open sequence generated by the iteration $x_k=\Ts_k(x_{k-1})$ is:
    \begingroup
    \medmuskip=0mu
    \thinmuskip=0mu
    \thickmuskip=0mu
    $$
    \underbrace{\begin{bmatrix}
        x^a_0\\x^b_0\\x^c_0
    \end{bmatrix}}_{x_0} = \begin{bmatrix}
        2\\3\\4
    \end{bmatrix},\quad
    \underbrace{\begin{bmatrix}
        x^a_1\\x^b_1\\x^c_1\\x^d_1
    \end{bmatrix}}_{x_1} = \begin{bmatrix}
        3\\3\\3\\7
    \end{bmatrix},\quad
    \underbrace{\begin{bmatrix}
        x^b_2\\x^c_2\\x^d_2
    \end{bmatrix}}_{x_2}=\begin{bmatrix}
        4\\4\\4
    \end{bmatrix},\quad
    \underbrace{\begin{bmatrix}
        x^e_3\\x^f_3
    \end{bmatrix}}_{x_3} = \begin{bmatrix}
        0\\1
    \end{bmatrix},
    $$
    \endgroup
    where:
    \begin{itemize}
    \item (from step $0$ to step $1$) all components associated with remaining labels update their value to the average of $x_0$, that is $(2+3+4)/3=3$; label $d$ arrives, and component $x^d_1$ is initialized at $1$;
    \item (from step $1$ to step $2$) all components associated with remaining labels update their value to the average of $x_1$, that is $(3+3+3+7)/4=4$; label $a$ departs, and component $x^a_2$ is left out;
    \item (from step $2$ to step $3$) labels $b,c,d$ departs, and components $x^b_3$, $x^c_3$, $x^d_3$ are left out; labels $e,f$ arrive, and components $x^e_3$, $x^f_3$ are initialized at $0$, $1$, respectively.
    \end{itemize}
\end{exmp}

The need to generalize the usual concept of \virg{\textit{set of fixed points}} $\hat{\X} = \{x : x=\Fs(x)\}$ for time-invariant standard operators $\Fs:\rea^{\I}\rightarrow\rea^{\I}$ arises naturally.
For open operators holds instead $\Fs_k:\rea^{\I_{k-1}}\rightarrow\rea^{\I_{k-1}}$ where $\I_k$ may change at any $k\in\nat$, yielding a trajectory of sets containing fixed points of different dimension, which we call the \virg{\textit{trajectory of sets of interest}}.
\begin{defn}[Trajectory of sets of interest]\label{def:TSI}
Consider~the iteration of an open operator $\Ts_k$ as in~\eqref{eq:open_iter} and let~$\hat{\X}_{k}$~be the set of fixed points of the operator $\Fs_{k+1}$ ruling the standard iteration as in~\eqref{eq:std_iter}, i.e.,
\begin{equation}\label{eq:tvg}
\hat{\X}_{k} := \{ x\in \rea^{\I_k} \mid  x = \Fs_{k+1}(x)\}.
\end{equation}
Then, the sequence ${\{\hat{\X}_k:k\in\nat\}}$ is called the \virg{trajectory of sets of interest} (TSI) of the operator $\Ts_k$. 
\end{defn}
If each operator $\Fs_k$ has a unique equilibrium point ${\hat{\X}_k = \{ \hat{x}_k \}}$, then the sequence $\{\hat{x}_k:k\in\nat\}$ of these points forms the \virg{\textit{trajectory of points of interest}}~\cite[Definition 3.1]{franceschelli2020stability}.

\begin{exmp}\label{exa:TSI}
    Consider the iteration described in Example~\ref{exa:open_seq_lin}. Then, the TSI is given by
    $$
    \hat{\X}_k = \{\alpha\bone\in \rea^{\I_{k}}\mid \alpha\in\rea\}.
    $$
    Indeed, at each $k\in\nat$, the set of fixed points of the standard iteration ruled by ${\Fs_k: x\mapsto F_k x}$ consists of the eigenvectors of $F_k$ corresponding to the (unique) unitary eigenvalue.
\end{exmp}

We formalize the concept of convergence for open sequences, emphasizing that comparing distances in spaces of different dimensions is unfair without normalization by the corresponding space's dimension.
Without normalization, iterations of an open operator with increasing number of components may produce sequences diverging from the TSI, even if the distance between new components and their TSI counterparts remains bounded.
We define convergence based on normalized distance from the TSI and provide an example clarifying the importance of this normalization.
\begin{defn}[Convergence of open sequences]\label{def:conv_os}
Consider the open sequence ${\{x_k\in\rea^{\I_k}:k\in\nat \}}$ generated by the iteration of an open operator as in~\eqref{eq:open_iter} whose TSI is ${\{\hat{\X}_k\subseteq \rea^{\I_k}:k\in\nat\}}$.
The open sequence is said to \virg{converge} to the TSI within a radius $R\geq0$ if
$$
\limsup_{k\rightarrow \infty} \ \frac{d(x_{k},\hat{\X}_k)}{\sqrt{\abs{\I_k}}}\leq R.
$$
\end{defn}

\begin{exmp}\label{exa:changing}
    Consider the iteration described in Example~\ref{exa:open_seq_lin} and assume that the components are totally renewed at each step and increase in number by 2 at each step, namely, ${D_k=\I_k}$ and ${\abs{\A_k}= \abs{\I_{k-1}} + 2}$.
    A possible open sequence generated by the iteration $x_k=\Ts_k(x_{k-1})$ is 
    \begingroup
    $$
    x_0 =
    \begin{bmatrix}
        +1\\-1
    \end{bmatrix}, \ \
    x_1 =
    \begin{bmatrix}
        +1\\+1\\-1\\-1
    \end{bmatrix}, \ \
    x_2 = 
    \begin{bmatrix}
        +1\\+1\\+1\\-1\\-1\\-1
    \end{bmatrix}, \ \ \cdots \ ,
    $$
    \endgroup
    where half of the components are initialized at $+1$ and the other half at $-1$.
    The point $\hat{x}_k\in\hat{\X}_k$ in the TSI attaining the minimum distance from $x_k$ is the null vector $\hat{x}_k=\bzero_{2(k+1)}$ and therefore 
    $$
    d(x_k,\hat{\X}_k) = d(x_k,\hat{x}_k) = d(x_k,\bzero_{2(k+1)}) = \sqrt{2(k+1)},
    $$
    which, in the limit of $k\rightarrow \infty$, diverges even though the new components have bounded distance of $1$ from the corresponding component of the TSI.
    Instead, when normalization is considered, it is possible to find a finite upper bound
    $$
    \limsup_{k\rightarrow\infty} \frac{d(x_k,\hat{\X}_k)}{\sqrt{2(k+1)}} = 1.
    $$
\end{exmp}

\subsection{Open distances and projections}
A notion of \virg{\textit{open distance}} is necessary to evaluate the distance between points with labeled components that belong to vector spaces of different dimensions.
We propose to evaluate such distance by only taking into consideration the common labeled components, disregarding the others. 
\begin{defn}[Open distance]
    Let $\I_1$ and $\I_2$ be two finite sets of labels. If $\I_1\cap \I_2\neq \emptyset$, the open distance ${d:\rea^{\I_1}\times \rea^{\I_2}\mapsto \rea_+}$ between points $x\in\rea^{\I_1}$ and $y\in\rea^{\I_2}$ is defined by
    $$
    d(x,y) = \norm{\tilde{x}-\tilde{y}}_2,\quad \text{where} \quad \begin{cases}
        \tilde{x}&=[x^i \text{ for } i\in\I_1\cap \I_2],\\
        \tilde{y}&=[y^i \text{ for } i\in\I_1\cap \I_2].
    \end{cases}
    $$
    Otherwise, if $\I_1\cap \I_2= \emptyset$ then $d(x,y)=0$.
\end{defn}
\begin{exmp}
    Consider the sets of labels $\I_1=\{a,b,c\}$ and $\I_2=\{b,c,d\}$ and let
    $$
    x=\begin{bmatrix}
        x^a\\x^b\\x^c
    \end{bmatrix}
    =
    \begin{bmatrix}
        1\\2\\3
    \end{bmatrix}\in \rea^{\I_1},\qquad
    y=\begin{bmatrix}
        y^b\\y^c\\y^d
    \end{bmatrix}
    =
    \begin{bmatrix}
        4\\5\\6
    \end{bmatrix}\in \rea^{\I_1}.
    $$
    Since the common components are those labeled by $b$ and $c$, the distance between them is given by
    $$
    d(x,y) = \norm{\begin{bmatrix}
        x^b\\x^c
    \end{bmatrix} - \begin{bmatrix}
        y^b\\y^c
    \end{bmatrix}} = \norm{\begin{bmatrix}
        2\\3
    \end{bmatrix} - \begin{bmatrix}
        4\\5
    \end{bmatrix}} = \sqrt{8}.
    $$
\end{exmp}

For the sake of simplicity, we overload the notation of standard distance in a way that the distance of a point $x\in\rea^{\I_1}$ from a set $\X\subseteq \rea^{\I_2}$ and the distance between two sets $\X\subseteq \rea^{\I_2}$, $\Y\subseteq \rea^{\I_3}$ of different dimension are naturally defined as in~\eqref{eq:dists}.
The projection of a point $x\in\rea^{\I_1}$ over a non-empty set ${\Y\subseteq \rea^{\I_2}}$ is, in general, a set of points $y\in \Y$ of minimum distance from $x$, given by the \textit{projection operator} defined next
$$
\proj(x,\Y) = \left\{y^\star\in\Y :\: d(x,y^\star)=\inf_{y\in\Y} \ d(x,y) \right\}
$$
Note that if $\I_1\supseteq\I_2$ then the projection reduces to a singleton (see Fig.~\ref{fig:larger_to_lower}), but if $\I_2\setminus \I_1\neq 0$ then it is indeed a set (see Fig.~\ref{fig:lower_to_larger}) because the components related to the elements in $\I_2$ that are not in $\I_1$ can be arbitrary within $\Y$. The same holds for the proximal.

\begin{exmp}
    Consider the sets of labels $\I_1=\{a,b\}$ and $\I_2=\{a\}$ as in Fig.~\ref{fig:larger_to_lower} and let $\Y=\{y\in\rea^{\I_2}\mid y\in[1,3]\}$. Then, considering the representation on the left of Fig.~\ref{fig:larger_to_lower}, the distance of $x=[x^a, x^b]^\top = [5, 1]^\top\in\rea^{\I_1}$ from $\Y$ is
    $$
    d(x,\Y) = d(x^a,\Y) = \min_{y\in\Y} \abs{5-y} = \abs{5-3} = 2, 
    $$
    and the projection of $x$ into $\Y$ is a singleton given by
    $$
    \proj(x,\Y) = 3\in\rea^{\I_2}.
    $$
    Instead, considering the representation on the right of Fig.~\ref{fig:larger_to_lower}, the distance of $x=[x^a, x^b]^\top = [2, 1]^\top\in\rea^{\I_1}$ from $\Y$ is
    $$
    d(x,\Y) = d(x^a,\Y) = \min_{y\in\Y} \abs{2-y} = 0, 
    $$
    and the projection of $x$ into $\Y$ is a singleton given by
    $$
    \proj(x,\Y) = 2\in\rea^{\I_2}.
    $$
\end{exmp}
\vspace{-1em}
\begin{figure}[!h]
    \centering
    \includegraphics[width=0.98\linewidth]{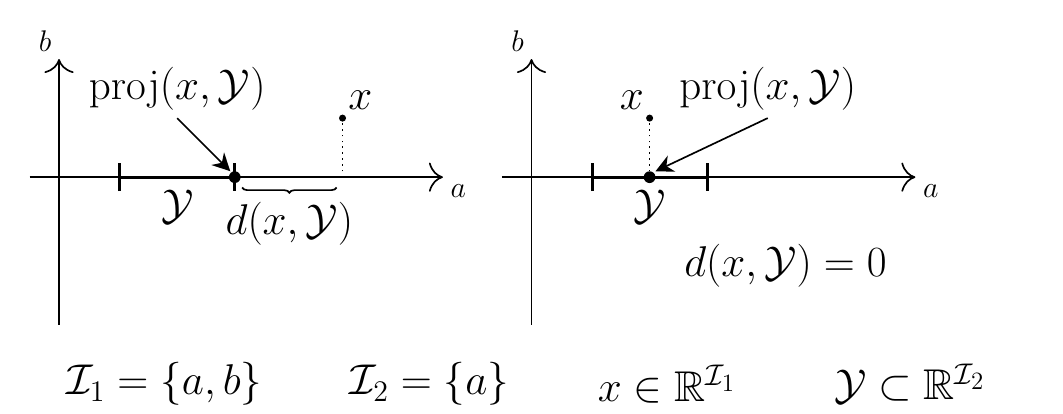}
    \caption{Representation of the open distance (curly brace) and the open projection (bold dot) in two examples with ${\mathcal{I}_2\subseteq \I_1\neq \emptyset}$. }
    \label{fig:larger_to_lower}
\end{figure}

\begin{exmp}
    Consider the sets of labels $\I_1=\{a\}$ and $\I_2=\{a,b\}$ as in Fig.~\ref{fig:lower_to_larger}.
    Then, considering the representation on the left of Fig.~\ref{fig:lower_to_larger}, the distance of $x= 5\in\rea^{\I_1}$ from $\Y=\{[y^a, y^b]\in\rea^{\I_2}\mid y^a\in[1,2],\ y^b\in\rea\}$ is
    $$
    d(x,\Y) = \min_{[y^a,y^b]\in\Y} \abs{5-y^a} = \abs{5-2} = 3, 
    $$
    and the projection of $x$ into $\Y$ is a set given by
    $$
    \proj(x,\Y) = \{[z^a,z^b]^\top\in\rea^{\I_2} \mid  z^a=2, \ z^b\in\rea\}.
    $$
    Instead, considering the representation on the right of Fig.~\ref{fig:larger_to_lower}, the distance of $x=5\in\rea^{\I_1}$ from $\Y=\{[y^a, y^b]\in\rea^{\I_2}\mid y^a\in\rea,\ y^b\in[-2,-1\}$ is
    $$
    d(x,\Y) = d(x^a,\Y) = \min_{[y^a,y^b]\in\Y} \abs{2-y^a} = 0, 
    $$
    and the projection of $x$ into $\Y$ is a set given by
    $$
    \proj(x,\Y) = \{[z^a,z^b]^\top\in\rea^{\I_2} \mid  z^a=5, \ z^b\in[-2,-1]\}.
    $$
\end{exmp}
\vspace{-1em}
\begin{figure}[!h]
    \centering
    \includegraphics[width=0.98\linewidth]{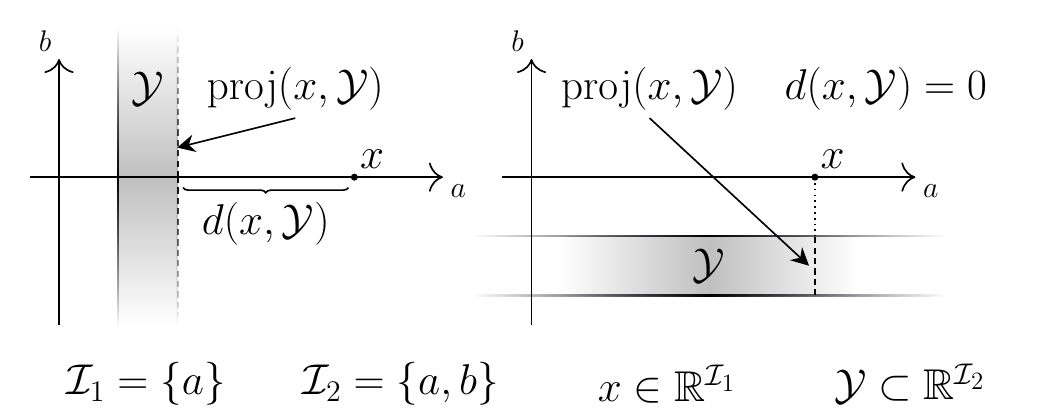}
    \caption{Representation of the open distance (curly brace) and the open projection (dashed line) in two examples with ${\mathcal{I}_2\setminus \I_1\neq \emptyset}$. }
    \label{fig:lower_to_larger}
\end{figure}

We also introduce the concept of \textit{shadow distance} between two spaces $\X\subseteq \rea^{\I_1}$, $\Y\subseteq\rea^{\I_2}$, that is the maximum distance between any pair of projections, namely,
$$
\dsh (\X,\Y) = \sup_{z\in\rea^{\I_1\cup\I_2}} d(\proj(z,\X),\proj(z,\Y)).
$$
The concept of shadow distance is essential as it allows formulating the following version of triangle inequality:
$$
d(z,\X)\leq d(z,\Y) + \dsh(\X,\Y).
$$

\begin{exmp}
    Consider the set of labels $\I=\{a\}$ and the sets $\X=\{x^{\I}\in\rea:x\in[1,2]\}$, $\Y=\{y^{\I}\in\rea:y\in[5,6]\}$. The standard distance between these sets is attained by the points $x=2$ and $y=5$, namely,
    $$
       d(\X,\Y) = \inf_{x \in \X} \ \inf_{y\in\Y} \ d(x,y) = \abs{2-5} = 3.
    $$
    On the other hand, the shadow distance (see Fig. \ref{fig:shadow}) is attained for all points $z\in\rea^{\I}$ such that $z\in(-\infty,2]\cup [6,+\infty)$. For $z\leq 2$, its projection onto $\X$ is $1$ and its projection onto $\Y$ is $5$, yielding
    $$
    \dsh (\X,\Y) = \sup_{z\in\rea^{\I}} d(\proj(z,\X),\proj(z,\Y)) = \abs{1-5} = 4.
    $$
\end{exmp}

\begin{figure}[!h]
    \centering
\includegraphics[width=0.98\linewidth]{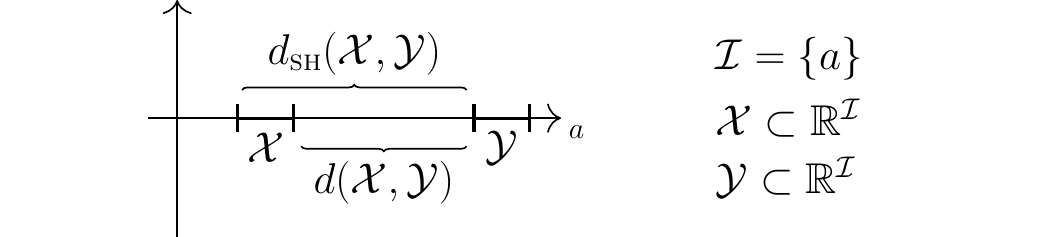}
\caption{Representation of the shadow distance compared to the standard distance in an example with ${\I=\mathcal{I}_1 = \I_2}$.}
    \label{fig:shadow}\vspace{-1em}
\end{figure}

\subsection{A convergence result for paracontractive operators}
In this section, we provide sufficient conditions ensuring the convergence of open sequences generated by the iteration of open operators  -- in the sense of Definition~\ref{def:conv_os} -- whose standard iteration is ruled by a \virg{\textit{paracontractive}} operator~\cite{elsner1992convergence,raja2021payoff,rai2023distributed}. {Paracontractive operators are a class of operators that generalizes that of contractive operators, as they enjoy a contractivity property between trajectories and fixed points, rather than between trajectories in general.
An alternative name could be \virg{quasicontractive} operators, following the definition of \virg{quasinonexpansive} operators given by Bauschke and Combettes in their book \cite[Page 70]{Bauschke2017}.}
\begin{defn}
An operator ${\Fs_k:\rea^{\I}\rightarrow\rea^{\I}}$ is said to be \virg{paracontractive} if there exists $\gamma\in[0,1)$ such that for all $k\geq 0$ and for all $x\in\rea^{\I}$ it holds
\begin{equation}\label{eq:paracon}
d(\Fs_{k+1}(x),\hat{\X}_{k}) \leq \gamma \cdot d(x,\hat{\X}_{k}),
\end{equation}
where $\hat{\X}_k$ is the set of fixed points of $\Fs_{k+1}$ for all $k\in\nat$. 
\end{defn}

The sufficient conditions we enforce correspond to limits on the variation of the TSI and on the process by which the labels components join and leave during the iteration.
In plain words, the normalized shadow distance between two consecutive sets of interests and the normalized distance of the arriving labeled components from the set of interest must be bounded from above. Moreover, the dimension of the state space cannot decrease too fast, i.e., there must be an upper bound to the ratio between the number of components in two consecutive steps.
These limits are formally defined next.
\begin{defn}[Bounded TSI]
Consider the TSI is ${\{\hat{\X}_k\subseteq \rea^{\I_k}:k\in\nat\}}$ of an open operator as in~\eqref{eq:open_iter}.
The TSI is said to have \virg{bounded variation} if
$$
\exists B\geq 0:\quad \frac{\dsh (\hat{\X}_{k},\hat{\X}_{k-1})}{\sqrt{\abs{\R_k}}}\leq B,\quad \forall k\in\nat.
$$
\end{defn}
\begin{defn}[Bounded departure process]\label{def:bounded-departure}
Consider the open sequence ${\{x_k\in\rea^{\I_k}:k\in\nat \}}$ generated by the iteration of an open operator as in~\eqref{eq:open_iter}.
The departure process is said to be \virg{bounded}~if
$$
\exists \beta\in(0,1):\quad \sqrt{\abs{\I_k}}\geq \beta \sqrt{\abs{\I_{k-1}}},\quad \forall k\in\nat.
$$
\end{defn}
\begin{defn}[Bounded arrival process]\label{def:bounded-arrival}
Consider the open sequence ${\{x_k\in\rea^{\I_k}:k\in\nat \}}$ generated by the iteration of an open operator as in~\eqref{eq:open_iter} whose TSI is ${\{\hat{\X}_k\subseteq \rea^{\I_k}:k\in\nat\}}$.
Let $x_k^{\textsc{A}}\in \rea^{\A_k}$ be the vector stacking the components of all arriving labels.
The arrival process is said to be \virg{bounded}~if
$$
\exists H\geq 0:\quad \frac{d(x^{\textsc{A}}_k,\hat{\X}_{k})}{\sqrt{\abs{\A_k}}}\leq H,\quad \forall k\in\nat.
$$
\end{defn}

\begin{exmp}
    Consider the iteration described in Example~\ref{exa:open_seq_lin} whose TSI is that described in Example~\ref{exa:TSI} and assume that the components change as outlined in Example~\ref{exa:changing}. Then:
    \begin{itemize}
        \item the TSI is bounded with $B=0$ because there are never remaining components;
        \item the departure process is bounded for any $\beta\in[0,1]$ because the number of components never decreases;
        \item the arrival process is bounded with $H=1$ because all components are arriving components initialized at $\{-1,+1\}$ and the corresponding components of the TSI are zeros. 
    \end{itemize} 
\end{exmp}

We now state and prove the main result of this section.

\begin{thm}\label{th:mainth}
Consider the iteration of a time-varying open operator $\Ts_k:\rea^{\I_{k-1}}\rightarrow \rea^{\I_k}$ given component-wise for $i\in\I_k$ by
\begingroup
\medmuskip=2mu
\thinmuskip=2mu
\thickmuskip=2mu
$$
x^{i}_k = \Ts^i_k(x_{k-1})=\begin{cases}
\Fs^i_k(x_{k-1}) & \text{if } i\in \R_k=\I_{k-1}\setminus \D_{k-1},\\
x^{\textsc{A},i}_k & \text{if } i\in \A_k=\I_k\setminus \I_{k-1}.
\end{cases}
$$
\endgroup
and assume that
\begin{itemize}
    \item[(a)] $\Fs_k$ is paracontractive with $\gamma\in(0,1)$;
    \item[(b)] the TSI has bounded variation $B\geq 0$;
    \item[(c)] the arrival process is bounded with $H\geq 0$.
    \item[(d)] the departure process is bounded with $\beta\in(\gamma,1)$.
\end{itemize}
Then, the open sequence ${\{x_k\in\rea^{\I_k}:k\in\nat \}}$ converges linearly with rate $\theta=\gamma/\beta\in(0,1)$ to the TSI within a radius
\begin{equation}\label{eq:radius}
    R = \dfrac{B+H}{1-\theta},
\end{equation}
according to the following punctual upper bound
$$
\frac{d(x_{k},\hat{\X}_k)}{\sqrt{\abs{\I_k}}} \leq \theta^k \frac{d(x_{0},\hat{\X}_0)}{\sqrt{\abs{\I_0}}} + \frac{1-\theta^k}{1-\theta}(B+H).
$$
\end{thm}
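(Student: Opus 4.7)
The plan is to establish a one-step contraction of the form $\mu_k \leq \theta\,\mu_{k-1} + (B+H)$ on the normalized distance $\mu_k := d(x_k,\hat{\X}_k)/\sqrt{\abs{\I_k}}$, with $\theta = \gamma/\beta \in (0,1)$, and then unroll this geometric recursion to recover both the punctual bound in the statement and the asymptotic radius $R = (B+H)/(1-\theta)$ demanded by Definition~\ref{def:conv_os}.

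First I would decompose the iterate along its two natural slices, $x_k^{\R_k} = [\Fs_k(x_{k-1})]^{\R_k}$ and $x_k^{\A_k} = x_k^{\textsc{A}}$. Setting $\tilde{x}_k := \Fs_k(x_{k-1})\in\rea^{\I_{k-1}}$, assumption (a) gives $d(\tilde{x}_k,\hat{\X}_{k-1}) \leq \gamma\,d(x_{k-1},\hat{\X}_{k-1})$. To bridge from $\hat{\X}_{k-1}$ to $\hat{\X}_k$, I would take $\hat{z}_{k-1}\in\hat{\X}_{k-1}$ realizing this bound and, via assumption (b), pass to a point $\hat{z}_k\in\hat{\X}_k$ with $\|\hat{z}_{k-1}^{\R_k}-\hat{z}_k^{\R_k}\|\leq \dsh(\hat{\X}_{k-1},\hat{\X}_k) \leq B\sqrt{\abs{\R_k}}$; a triangle inequality on the $\R_k$ slice then produces $\|x_k^{\R_k}-\hat{z}_k^{\R_k}\|\leq \gamma\,d(x_{k-1},\hat{\X}_{k-1})+B\sqrt{\abs{\R_k}}$. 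For the arriving slice, assumption (c) yields some $\hat{w}_k\in\hat{\X}_k$ with $\|x_k^{\A_k}-\hat{w}_k^{\A_k}\|\leq H\sqrt{\abs{\A_k}}$. Fusing these two one-sided bounds into a single $\hat{y}_k\in\hat{\X}_k$ controlling both slices, the orthogonal decomposition $\|x_k-\hat{y}_k\|^2 = \|x_k^{\R_k}-\hat{y}_k^{\R_k}\|^2+\|x_k^{\A_k}-\hat{y}_k^{\A_k}\|^2$ combined with $\sqrt{a^2+b^2}\leq a+b$ for $a,b\geq 0$ delivers the key one-step inequality
\begin{equation*}
d(x_k,\hat{\X}_k) \leq \gamma\,d(x_{k-1},\hat{\X}_{k-1}) + B\sqrt{\abs{\R_k}} + H\sqrt{\abs{\A_k}}.
\end{equation*}

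Dividing by $\sqrt{\abs{\I_k}}$ and using $\abs{\R_k},\abs{\A_k}\leq\abs{\I_k}$ together with the departure bound (d), $\sqrt{\abs{\I_{k-1}}}\leq\sqrt{\abs{\I_k}}/\beta$, produces the target recursion $\mu_k\leq\theta\mu_{k-1}+B+H$; unrolling it yields the claimed punctual bound, and $\limsup_{k\to\infty}$ collapses the $\theta^k$ term, leaving exactly $R=(B+H)/(1-\theta)$. The hard part is the fusion step: the shadow-distance construction only guarantees a point in $\hat{\X}_k$ that is good on the $\R_k$ slice but says nothing about its $\A_k$ coordinates, while the arrival bound only guarantees a (possibly different) point that is good on $\A_k$ but uncontrolled on $\R_k$. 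Producing a single $\hat{y}_k\in\hat{\X}_k$ simultaneously close to $x_k^{\R_k}$ and to $x_k^{\textsc{A}}$ is the crux, and I expect it to rest either on an implicit coordinate decoupling of the TSI between remaining and arriving labels, or on a refined argument that feeds the shadow-distance target into the arrival-side projection (or vice versa) while controlling the slippage introduced.
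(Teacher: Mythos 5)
Your proposal retraces the paper's proof almost step for step: slice $x_k$ into the remaining part $x_k^{\textsc{R}}$ and the arriving part $x_k^{\textsc{A}}$, control the remaining slice by paracontractivity plus the shadow-distance triangle inequality $d(z,\X)\leq d(z,\Y)+\dsh(\X,\Y)$ (using that $x_k^{\textsc{R}}$ is a sub-vector of $\Fs_k(x_{k-1})$, so its open distance to $\hat{\X}_{k-1}$ is no larger), control the arriving slice by assumption (c), merge the two via $\sqrt{a^2+b^2}\leq a+b$, bound $\abs{\R_k},\abs{\A_k}\leq\abs{\I_k}$, normalize, convert $1/\sqrt{\abs{\I_k}}$ into $\theta/(\gamma\sqrt{\abs{\I_{k-1}}})$ through assumption (d), and unroll the recursion $\mu_k\leq\theta\mu_{k-1}+B+H$ into the stated punctual bound and radius. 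All of that matches the paper exactly, up to your slightly tighter intermediate constants $B\sqrt{\abs{\R_k}}+H\sqrt{\abs{\A_k}}$, which collapse to the paper's $(B+H)\sqrt{\abs{\I_k}}$ after normalization.

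The one step you leave open --- producing a single $\hat{y}_k\in\hat{\X}_k$ that is simultaneously close to both slices --- is precisely where the paper is least careful: its proof opens with the \emph{equality} $d(x_k,\hat{\X}_k)=\bigl(d^2(x_k^{\textsc{R}},\hat{\X}_k)+d^2(x_k^{\textsc{A}},\hat{\X}_k)\bigr)^{1/2}$, where each slice distance is the open distance taken over that slice's labels only. Since $d^2(x_k,\hat{\X}_k)=\inf_{y\in\hat{\X}_k}\bigl(\norm{x_k^{\textsc{R}}-y^{\textsc{R}}}^2+\norm{x_k^{\textsc{A}}-y^{\textsc{A}}}^2\bigr)$, the direction of this identity that the upper bound actually needs ($\leq$) is exactly your fusion requirement: the two coordinate-wise infima over $y\in\hat{\X}_k$ must be (nearly) attained by a common $y$. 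This holds when $\hat{\X}_k$ decouples as a product across the $\R_k$ and $\A_k$ coordinates, but not for an arbitrary set; in general only the reverse inequality $\geq$ is automatic. So your instinct that this is the load-bearing step is correct --- the paper resolves it by assertion rather than by the refined argument you anticipate, and no additional idea beyond that asserted decomposition separates your proposal from the published proof. Everything downstream is identical.
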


\begin{proof}
Let us split the state $x_k=[x^\textsc{R}_k;x^\textsc{A}_k]$ into two vectors such that:
\begin{itemize}
    \item $x_k^{\textsc{R}}\in \rea^{\R_k}$ is the vector of the remaining components; 
    \item $x_k^{\textsc{A}}\in \rea^{\A_k}$ is the vector of the new components.
\end{itemize}
For any two consecutive steps $k-1,k$ with $k\in\nat$, it holds:
\allowdisplaybreaks
\begin{align*}
d(x_{k},\hat{\X}_k) & = \sqrt{d^2(x_k^{\textsc{R}},\hat{\X}_k)+ d^2(x_k^{\textsc{A}},\hat{\X}_k)},\\
& \leq \sqrt{d^2(x_k^{\textsc{R}},\hat{\X}_k)}+\sqrt{ d^2(x_k^{\textsc{A}},\hat{\X}_k)},\\
& = d(x_k^{\textsc{R}},\hat{\X}_k)+ d(x_k^{\textsc{A}},\hat{\X}_k),\\
&\overset{(i)}{\leq}  d(x_k^{\textsc{R}},\hat{\X}_k)+ \sqrt{\abs{\I_k}}H,\\
&\overset{(ii)}{\leq} d(x_k^{\textsc{R}},\hat{\X}_{k-1})+ \dsh(\hat{\X}_k,\hat{\X}_{k-1}) + \sqrt{\abs{\I_k}}H,\\
%
&\overset{(iii)}{\leq} d(x_k^{\textsc{R}},\hat{\X}_{k-1})+ \sqrt{\abs{\I_k}}(B + H),\\
%
&\overset{(iv)}{\leq} d(\Fs_k(x_{k-1}),\hat{\X}_{k-1})+ \sqrt{\abs{\I_k}}(B + H),\\
&\overset{(v)}{\leq} \gamma d(x_{k-1},\hat{\X}_{k-1}) + \sqrt{\abs{\I_k}}(B + H),
\end{align*}
where $(i)$ holds by assumption $(c)$ and $\abs{\A_k}\leq \abs{\I_k}$; $(ii)$ holds by triangle inequality; $(iii)$ holds by assumption $(b)$ and $\abs{\R_k}\leq \abs{\I_k}$; $(iv)$ holds because the vector $x_k^{\textsc{R}}$ is entirely contained into $\Fs_k(x_{k-1})$ by definition; $(v)$ holds by assumption $(a)$.
Thus, we have shown that,
$$
    \frac{d(x_{k},\hat{\X}_k)}{\sqrt{\abs{\I_k}}} \leq \gamma\frac{d(x_{k-1},\hat{\X}_{k-1})}{\sqrt{\abs{\I_k}}}  + B + H.
$$
which, by assumption $(d)$, becomes
$$
\frac{d(x_{k},\hat{\X}_k)}{\sqrt{\abs{\I_k}}} \leq \frac{\gamma}{\beta}\frac{d(x_{k-1},\hat{\X}_{k-1})}{\sqrt{\abs{\I_{k-1}}}}  + B + H,
$$
By iterating over $k\in\nat$ one gets
\begingroup
$$
\frac{d(x_{k},\hat{\X}_k)}{\sqrt{\abs{\I_k}}} \leq \left(\frac{\gamma}{\beta}\right)^k \frac{d(x_{0},\hat{\X}_0)}{\sqrt{\abs{\I_0}}} + (B+H)\sum_{i=0}^{k-1} \left(\frac{\gamma}{\beta}\right)^i.
$$
\endgroup
Since in the limit of $k\rightarrow \infty$, then $(\gamma/\beta)^k$ goes to $0$ and the geometric series equals to $(1-\theta^k)/(1-\theta)$ and goes to $1/(1-\gamma/\beta)$, it holds
$$
\limsup_{k\rightarrow \infty} \frac{d(x_{k},\hat{\X}_k)}{\sqrt{\abs{\I_k}}}  \leq \frac{B+H}{1-\frac{\gamma}{\beta}} =: R,
$$
thus concluding the proof.
\end{proof}
\begin{cor}\label{cor:mainth}
Consider the scenario of Theorem~\ref{th:mainth} and the following simplified cases:
\begin{itemize}
    \item[$(a)$] The iteration is not open;
    \item[$(b)$] The iteration is time-invariant.
\end{itemize}
Then, the results of Theorem~\ref{th:mainth} become:
\begin{itemize}
    \item $(a)$ implies that the convergence rate is equal to the paracontractivity constant $\theta=\gamma$ because $\beta=1$ and implies that the radius reduces to $R=B/(1-\gamma)$ because $H=0$;
    \item $(a)\land(b)$ implies that the radius reduces to zero because $B=0$, i.e., the sequence converges to a fixed point for any initial condition.
\end{itemize}
\end{cor}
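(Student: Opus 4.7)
The plan is to specialize the conclusions of Theorem~\ref{th:mainth} by observing that each simplifying assumption pins down one of the structural constants $\beta$, $H$, $B$ to a specific degenerate value, after which the claimed identities follow by direct substitution into the formulas $\theta = \gamma/\beta$ and $R = (B+H)/(1-\theta)$.

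For case $(a)$, the first step is to unpack what ``not open'' means: the label sets do not vary, so $\I_k = \I_{k-1}$ and consequently $\A_k = \D_{k-1} = \emptyset$ for all $k\in\nat$. This forces $\sqrt{\abs{\I_k}} = \sqrt{\abs{\I_{k-1}}}$, so Definition~\ref{def:bounded-departure} is satisfied with $\beta = 1$; and since no components arrive, the numerator in Definition~\ref{def:bounded-arrival} vanishes identically, which lets us take $H = 0$. Substituting these values into the expressions from Theorem~\ref{th:mainth} immediately yields the stated $\theta = \gamma$ and $R = B/(1-\gamma)$.

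For the combined case $(a)\land(b)$, I would additionally observe that time-invariance means $\Fs_k = \Fs$ for every $k$, so the fixed point set $\hat{\X}_k$ defined in~\eqref{eq:tvg} is itself independent of $k$. The shadow distance between identical sets is zero, hence $\dsh(\hat{\X}_k,\hat{\X}_{k-1}) = 0$ and the bounded-variation property holds with $B = 0$. Combined with $H = 0$ inherited from case $(a)$, the radius collapses to $R = 0$, and Definition~\ref{def:conv_os} delivers convergence to the (constant) fixed point set; since the sequence now lives in a single Euclidean space $\rea^{\I}$, this is ordinary convergence to a fixed point of $\Fs$.

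The only subtle point is that $\beta = 1$ lies on the boundary of the interval $(\gamma,1)$ required by hypothesis $(d)$ of Theorem~\ref{th:mainth}. This is not a real obstacle: inspecting the proof of the theorem, the only quantitative use of $\beta$ is via the ratio $\theta = \gamma/\beta$, and with $\beta = 1$ and $\gamma < 1$ we still have $\theta < 1$, so the geometric series step and the linear-rate conclusion remain valid verbatim. The corollary therefore acts as a consistency check, recovering the classical paracontractive convergence guarantees as a degenerate instance of the open-operator bound.
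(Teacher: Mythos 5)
Your proposal is correct and follows the same route the paper intends: the corollary is stated without a separate proof precisely because it reduces to substituting $\beta=1$, $H=0$ (and additionally $B=0$ in case $(a)\land(b)$) into $\theta=\gamma/\beta$ and $R=(B+H)/(1-\theta)$, which is exactly what you do. Your remark that $\beta=1$ sits on the boundary of the interval $(\gamma,1)$ demanded by hypothesis $(d)$, and that the theorem's proof still goes through since it only uses $\theta=\gamma/\beta<1$, is a detail the paper glosses over and is worth having made explicit.
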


\newpage

\section{Problem of Interest and\\ Proposed Algorithm: \alg}\label{sec:algorithm}

With the theoretical framework designed in Section \ref{sec:OOT}, we have now the tools to design and analyze algorithms for optimization and learning over open networks. In particular, we are interested in algorithms that can solve the following
\begin{equation}\label{eq:optprob}
    \min_{y\in\rea^\P} \sum_{i \in\V_k} f^i_k(y),
\end{equation}
where $\P$ is a set of labels, $f^i_k:\rea^\P\mapsto\rea$ denotes the local objective function of an agent $i\in\V_k$ in the network at time $k$, where $\V_k$ represents the time-varying set of agents.
Most of the literature addresses this problem under the assumption that the set of agents in the network remains constant over time, i.e., ${\V=\V_0=\V_1,\cdots}$, which facilitates the application of various results from operator theory to study the convergence of custom-designed algorithms.
Instead, this work considers the problem without this assumption, thus letting the agents be able to leave and join the network arbitrarily, which results in a time-varying number of agents ${n_k=\abs{\V_k}\in(0,\infty)}$: such networks are usually called \virg{\textit{Open Multi-Agent Systems}} (OMASs).
We propose a distributed algorithm called \virg{\alg} to solve the problem in~\eqref{eq:optprob} in OMASs and we carry out a convergence analysis by means of \virg{\textit{open operator theory}} developed in the previous Section~\ref{sec:OOT}.

\begin{rem}[Relationship with online optimization]
We remark that~\eqref{eq:optprob} is an online optimization problem because the optimal solution is time-varying due to two different factors: 1) the local costs are time-varying \cite{li_survey_2023}; 2) the agents participating in the network change over time, thus yielding a change in the set of the local costs. Consequently, even when the local costs are static, the problem solution is still time-varying due to the open nature of the network.
\end{rem}

\subsection{Problem set-up and working assumptions}
An OMAS consists of a time-varying set of agents $\V_k$ which may leave and join the network at any time $k\in\nat$. We define the sets of remaining/arriving/departing agents as follows
$$
\Vr_k=\V_{k}\cap V_{k-1},\:\:
\Va_k=\V_{k}\setminus \V_{k-1},\:\:
\Vd_k=\V_{k}\setminus\V_{k+1},
$$
The agents are linked according to a graph ${\G_k=(\V_k,\E_k)}$, where ${\E_k\subseteq \V_k\times \V_k}$ represents the set of agents' pairs that are linked by a point-to-point communication channel.
The set of agents that can communicate with the $i$-th agent at time $k$ is denoted by $\mathcal{N}^i_{k}=\{j\in \V_k: (i,j)\in \E_k\}$ and its cardinality is denoted by $\eta^i_{k}=\abs{\mathcal{N}^i_{k}}$; note that graphs are assumed to be without self-loops, i.e., $i\notin \mathcal{N}^i_{k}$.
We also denote by ${\xi_k=\abs{\E_k}=\eta^1_k+\cdots+\eta^{n_k}_k}$ the total number of communication channels.
We formalize next our assumptions on the communication graph among the agents.
\begin{assum}\label{as:graph}
    The communication graph $\G_k=(\V_k,\E_k)$ of the OMAS satisfies the following at all times $k\in\nat$:
    \begin{itemize}
        \item undirected, i.e., $(i,j)\in\E_k$ if and only if $(j,i)\in \E_k$;
        \item connected, i.e., there is a sequence of consecutive pairs $(i,a),(a,b),\cdots,(y,z),(z,j)$ in $\E_k$ for all ${i,j\in\V_k}$.
    \end{itemize}
\end{assum}

An OMAS can actually solve the under suitable assumptions of the local objective functions and the corresponding local and global solutions. 
We formalize our set of assumptions in Assumption~\ref{as:all}, which makes use of the following notation for the set of solutions to the problem in~\eqref{eq:optprob},\vspace{-0.5em}
$$
\Y^{\star}_k = \left\{y^\star_{k}\in\rea^{\P}: \sum_{i \in\V_k} f^i_k(y^\star_{k})=\min_{y\in\rea^{\P}} \sum_{i \in\V_k} f^i_k(y)\right\},
$$
and for the minimizers of each local cost function,\vspace{-0.5em}
\begin{equation}\label{eq:opt-loc}
\Y^{i,\star}_k = \left\{y^{i,\star}_{k}\in\rea^{\P}: f^i_k(y^{i,\star}_{k})=\min_{y\in\rea^{\P}} f^i_k(y)\right\}.
\end{equation}

\begin{assum}\label{as:all}
The problem in~\eqref{eq:optprob} is such that, $\forall k\in\nat $:
\begin{itemize}
    \item[(i)] the local cost functions $f^i_k$ are proper~\cite[Definition~1.4]{Bauschke2017}, lower semi-continuous~\cite[Lemma~1.24]{Bauschke2017}, and convex~\cite[Definition~8.1]{Bauschke2017} for all $i\in\V_k$;
    \item[(ii)] the set of minimizers $\Y^{i,\star}_k\subseteq \rea^p$ for each local cost function $f^i_k$ is not empty;
    \item[(iii)] the distance between two consecutive global solutions $y^\star_{k}\in\Y^\star_k$ and $y^\star_{k-1}\in\Y^\star_{k-1}$ is upper bounded by a constant $\sigma \geq 0$; 
    \item[(iv)] the distance between any local solution $y^{i,\star}_{k}\in \Y^{i,\star}_k$ and any global solution $y^\star_{k}\in \Y^\star_k$ is upper bounded by $\omega\geq 0$.
\end{itemize}
\end{assum}
Assumption~\ref{as:all}(i) is standard in the context of distributed optimization, but it is not sufficient to guarantee that the set of solutions $\Y^\star_k$ is not empty. This is, instead, ensured by Assumption~\ref{as:all}(ii), which requires that each local set of minimizers $\Y^{i,\star}_k$ is not empty.
On the other hand, $\Y^\star_k$ may also contain an infinite number of solutions and be unbounded. Thus, Assumption~\ref{as:all}(iii) ensures an upper bound on the distance between any solution at time $k$ and any solution at time $k-1$.
Due the heterogeneity of the local objective functions, the local minimizers in $\Y^{i,\star}_k$ can be arbitrarily far away from the global minimizers in $\Y^\star_k$. Thus, Assumption~\ref{as:all}(iv) ensures that such distance is bounded by a constant at any time $k$.\vspace{-1em}

\subsection{\alg and convergence analysis}\label{sec:DOOT-admm}
To solve the problem in~\eqref{eq:optprob} in a distributed way over an open network of agents, we propose the open version of the Alternating Direction Method of Multipliers (ADMM), which we call \virg{$\text{\alg}$}, whose implementation is detailed in Algorithm~\ref{alg:doot} provided in the next page.
\alg requires each agent $i\in\V_k$ to update/initialize a state variable $x^{ij}\in\rea^{\P}$ for every agent $j\in\N^i_k$ with which it has an open communication channel. By further defining,
$$
\R^i_k=\N^i_{k}\cap \N^i_{k-1},\:\: \A^i_k=\N^i_{k}\setminus \N^i_{k-1},\:\: \D^i_k=\N^i_{k}\setminus\N^i_{k+1},
$$
we can formalize the open operator describing \alg as follows, which makes use of two design parameters, the relaxation $\alpha\in(0,1)$ and the penalty $\rho>0$,
\begingroup
\medmuskip=2mu
\thinmuskip=2mu
\thickmuskip=2mu
$$
x^{ij}_k = \Ts^{ij}_k(x_{k-1})=\begin{cases}
\Fs^{ij}_k(x_{k-1}) & \text{if } i\in \Vr_{k} \land \ j\in \R^i_k,\\
\rho y^{i,\star}_k & \text{if } i\in \Va_k \land j\in\N^i_k,
\end{cases}
$$
\endgroup
where $y^{i,\star}_k\in \Y^{i,\star}_k$ is a local minimizer,
and
$$
\begin{aligned}
\Fs^{ij}_k(x_{k-1}) & = (1 - \alpha)  x^{ij}_{k-1}  + \alpha (2\rho y^j_{k-1} - x^{ji}_{k-1}),\\
y^j_{k-1} & = \prox_{f^j_{k-1}}^{1/\rho \eta^j_{k-1}} \Big( \textstyle \frac{1}{\rho \eta^j_{k-1}} \sum_{\ell \in \N^j_{k-1}}  x^{j\ell}_{k-1} \Big).
\end{aligned}
$$

The local update $x^{ij}_k=\Fs^{ij}_k(x_{k-1})$ requires that all agents ${j\in\R^i_k}$ have previously transmitted to agent $i\in\Vr_k$ both their state $x^{ji}_{k-1}$ and the quantity resulting by the proximal operation, which are internal variables of \alg denoted by $y^j_{k-1}$.
Since the standard iteration of \alg is ruled by the operator $\Fs_k=[\cdots,\Fs^{ij}_k,\cdots]^\top$, which is derived by applying the relaxed Peaceman-Rachford splitting method to the dual of the distributed version of the problem in~\eqref{eq:optprob} (see~\cite{Bastianello21} and references therein), its fixed points are such that all the internal variables are the same and equal to an optimal solution $y^\star_k\in\Y^\star_k$ of the problem in~\eqref{eq:optprob}. Thus, the internal variables constitute the output estimation of the agents of the optimal solutions.
The above described operations to be performed by each agent in the network to correctly execute \alg are detailed in Algorithm~\ref{alg:doot}.
To prove the convergence of \alg, we resort to our main convergence result in Theorem~\ref{th:mainth} for open operators. We first show that Assumptions~\ref{as:graph}-\ref{as:all} are sufficient to guarantee the existence of a TSI and its boundedness (Lemma~\ref{lem:DOOT_TSI}) as well as the boundedness of the arrival process (Lemma~\ref{lem:DOOT_BAR}). 

\begin{lem}\label{lem:DOOT_TSI}
Consider an OMAS running \alg to distributedly solve an optimization problem as in~\eqref{eq:optprob} under Assumptions~\ref{as:graph}-\ref{as:all}.
Then, there is a TSI $\{\hat{\X}_k:k\in\nat\}$ given by
\begingroup
\medmuskip=2mu
\thinmuskip=2mu
\thickmuskip=2mu
\begin{equation}\label{eq:TSI_existence}
\hat{\X}_{k} = \{ \hat{x}_k \mid (I+P_k)  \hat{x}_k = 2\rho P_kA_k(\bone_{n_k}\otimes y^\star_{k}),\ y^\star_{k}\in\Y^\star_k\},
\end{equation}
\endgroup
where $\Y^\star_k\neq \emptyset$ is the set of solutions to the problem in~\eqref{eq:optprob}. Moreover, the TSI is bounded with\vspace{-0.5em}
\begin{equation}\label{eq:TSI_bound}
B=\rho\sigma.
\end{equation}
\end{lem}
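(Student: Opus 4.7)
The plan is to prove the two claims separately. For the existence of $\Y^\star_k$, I would combine Assumption~\ref{as:all}(i), (ii), (iv): the aggregate $\sum_{i \in \V_k} f^i_k$ is proper, lower semi-continuous, and convex; picking any $y^{i,\star}_k \in \Y^{i,\star}_k$ (nonempty by (ii)), the uniform bound $\omega$ from (iv) confines all low sublevel sets of $\sum_i f^i_k$ to a ball around a reference local minimizer, so a global minimizer exists. I would then characterize the fixed points of $\Fs_k$ by imposing $\hat{x}^{ij}_k = \Fs^{ij}_k(\hat{x}_k)$. Since $\alpha \in (0,1)$, the defining relation collapses edge-by-edge to
\[
\hat{x}^{ij}_k + \hat{x}^{ji}_k = 2\rho\, \hat{y}^j_k, \qquad \forall (i,j) \in \E_k,
\]
with $\hat{y}^j_k = \prox_{f^j_k}^{1/(\rho\eta^j_k)}\bigl(\textstyle \frac{1}{\rho\eta^j_k}\sum_{\ell \in \N^j_k} \hat{x}^{j\ell}_k\bigr)$.

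Stacking these identities into matrix form -- with $P_k$ the permutation swapping each $(i,j)$-block with the $(j,i)$-block of $\hat{x}_k$ and $A_k$ the block matrix that replicates per-agent quantities across the incident edges -- gives
\[
(I + P_k)\,\hat{x}_k = 2\rho\, P_k A_k\,(\bone_{n_k} \otimes \hat{y}_k),
\]
where $\hat{y}_k$ stacks $[\hat{y}^j_k]_{j \in \V_k}$. Because \alg is the relaxed Peaceman-Rachford splitting applied to the dual of the distributed reformulation of~\eqref{eq:optprob} over the connected graph $\G_k$ (see~\cite{Bastianello21}), its fixed points encode consensus on a common optimum: $\hat{y}^j_k = y^\star_k$ for all $j \in \V_k$ and some $y^\star_k \in \Y^\star_k$. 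Substituting $\hat{y}_k = \bone_{n_k} \otimes y^\star_k$ then yields the characterization in~\eqref{eq:TSI_existence}.

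For the variation bound, I would evaluate $\dsh(\hat{\X}_k,\hat{\X}_{k-1})$, which by the definition of open distance only sees the common edge-coordinates -- collectively of cardinality $|\R_k|$. For any reference $z$, the projections $\hat{x}_k \in \proj(z,\hat{\X}_k)$ and $\hat{x}_{k-1} \in \proj(z,\hat{\X}_{k-1})$ are affine images, through~\eqref{eq:TSI_existence}, of optima $y^\star_k \in \Y^\star_k$, $y^\star_{k-1} \in \Y^\star_{k-1}$. Since the right-hand side of~\eqref{eq:TSI_existence} depends linearly on $y^\star$ with gain $\rho$, each retained coordinate satisfies $\|\hat{x}^{ij}_k - \hat{x}^{ij}_{k-1}\| \leq \rho\,\|y^\star_k - y^\star_{k-1}\| \leq \rho\sigma$ by Assumption~\ref{as:all}(iii). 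Summing across the $|\R_k|$ retained coordinates and taking the supremum over $z$ gives $\dsh(\hat{\X}_k,\hat{\X}_{k-1}) \leq \rho\sigma\sqrt{|\R_k|}$, so the normalization in the definition of bounded variation delivers $B = \rho\sigma$.

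The main obstacle is the last step: the supremum over $z$ in $\dsh$ interacts subtly with the freedom to pick different optima $y^\star_k, y^\star_{k-1}$ when realizing each projection. One must argue that, for every $z$, the two projections can be realized by a pair of optima that simultaneously witnesses $\|y^\star_k - y^\star_{k-1}\| \leq \sigma$ from Assumption~\ref{as:all}(iii); parameterizing each TSI by its optimizer and reducing the projection to a minimization over $\Y^\star_k$ (respectively $\Y^\star_{k-1}$), combined with an exchange of $\sup_z$ and the inner infima, should make this rigorous.
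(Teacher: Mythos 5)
Your proposal is correct and follows essentially the same route as the paper: the fixed-point set is characterized through the edge-wise condition $\hat{x}^{ij}+\hat{x}^{ji}=2\rho\,\hat{y}^{j}$ together with the Peaceman--Rachford consensus property, and the variation bound comes from the affine dependence of the projections on $y^\star$ with gain $\rho$ (the paper makes this explicit via the block pseudoinverse $(I+P_k)^\dag=\tfrac14(J\otimes I_p)$), summed over the $\abs{\R_k}$ retained edge-coordinates to give $\rho\sigma\sqrt{\abs{\R_k}}$. The subtlety you flag at the end is real but is resolved exactly as you suggest---Assumption~\ref{as:all}(iii) is read as holding for \emph{any} pair of consecutive optima, so whichever optimizers realize the two projections of a given $z$ are already within $\sigma$ of each other; the only blemish is your opening existence argument, which is slightly circular since Assumption~\ref{as:all}(iv) presupposes rather than implies that $\Y^\star_k\neq\emptyset$.
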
\vspace{-0.5em}
\begin{proof}
    See Section A in the Appendix.
\end{proof}

\begin{lem}\label{lem:DOOT_BAR}
Consider an OMAS running \alg to distributedly solve an optimization problem as in~\eqref{eq:optprob} under Assumptions~\ref{as:graph}-\ref{as:all}.
Then, the arrival process is bounded with\vspace{-0.5em}
\begin{equation}\label{eq:TSI_boundarr}
H = \rho\omega. 
\end{equation}
\end{lem}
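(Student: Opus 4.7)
The plan is to exhibit, for each $k$, a specific element $\hat{x}_k$ of the TSI $\hat{\X}_k$ whose coordinates along the arriving labels $\A_k$ coincide, coordinate by coordinate, with $\rho y^\star_k$ for some global optimizer $y^\star_k$. Because the open distance $d(x^{\textsc{A}}_k, \hat{\X}_k)$ is computed only on the common labels, which in this case are exactly $\A_k$, the bound will then collapse to a coordinate-wise comparison between the local minimizer $y^{i,\star}_k$ used for initialization and the global minimizer $y^\star_k$, at which point Assumption~\ref{as:all}(iv) immediately delivers the desired $H=\rho\omega$.

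First I would make the arrivals explicit: by the definition of \alg, for every $i \in \Va_k$ and every $j \in \N^i_k$ the new variable is initialized as $x^{ij}_k = \rho y^{i,\star}_k$ with $y^{i,\star}_k \in \Y^{i,\star}_k$ a local minimizer of $f^i_k$, which exists by Assumption~\ref{as:all}(ii). Fixing then any global optimizer $y^\star_k \in \Y^\star_k$, I would use the characterization~\eqref{eq:TSI_existence} of the TSI to construct a fixed point $\hat{x}_k \in \hat{\X}_k$ whose arriving-edge components satisfy $\hat{x}^{ij}_k = \rho y^\star_k$. The constraint $(I+P_k)\hat{x}_k = 2\rho P_k A_k(\bone_{n_k}\otimes y^\star_k)$ only couples each edge variable with its reverse counterpart $\hat{x}^{ji}_k$ via $\hat{x}^{ij}_k + \hat{x}^{ji}_k = 2\rho y^\star_k$, leaving enough dual freedom on the remaining (non-arriving) edges to accommodate the prescribed choice on $\A_k$. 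With $\hat{x}_k$ in hand, Assumption~\ref{as:all}(iv) yields
$$d(x^{\textsc{A}}_k, \hat{\X}_k)^2 \leq \sum_{(i,j)\in\A_k} \norm{x^{ij}_k - \hat{x}^{ij}_k}^2 = \sum_{(i,j)\in\A_k} \rho^2 \norm{y^{i,\star}_k - y^\star_k}^2 \leq \rho^2 \omega^2 \abs{\A_k},$$
and dividing by $\sqrt{\abs{\A_k}}$ produces $H = \rho\omega$ uniformly in $k$.

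The main obstacle I anticipate is the rigorous verification that the prescribed restriction $\hat{x}^{ij}_k = \rho y^\star_k$ on arriving edges can genuinely be extended to a full element of $\hat{\X}_k$. This reduces to a linear-algebra statement about the image and kernel of $I+P_k$ in the edge-variable space, and relies on the sign-flip symmetry of the dual residuals. The argument should parallel the construction used in the proof of Lemma~\ref{lem:DOOT_TSI}, but specialized so as to impose a prescribed profile on the subset $\A_k$ of edges while letting the remaining coordinates absorb the subgradient constraints at $y^\star_k$.
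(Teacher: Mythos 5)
Your proof is correct and reaches the same constant $H=\rho\omega$, but it takes a slightly different route from the paper. The paper computes the \emph{exact} projection of $x^{\textsc{A}}_k$ onto the affine set \eqref{eq:TSI_existence} via the pseudo-inverse formula $\hat{z}=z-(I+P_k)^\dag\big((I+P_k)z-2\rho P_kA_k(\bone_{n_k}\otimes y^\star_k)\big)$, which leads to the residual $\tfrac{\rho}{2}(J\otimes I_p)\big([y^{i,\star}_k;y^{j,\star}_k]-[y^\star_k;y^\star_k]\big)$ per edge pair and then requires the norm estimates $\norm{J}^2\le\norm{J}_1\norminf{J}=4$; you instead upper-bound the infimum by the distance to one \emph{feasible} point, which removes the pseudo-inverse and Kronecker-norm machinery entirely and is the more elementary argument. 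Two remarks. First, the ``main obstacle'' you anticipate dissolves once you note that $(I+P_k)\hat{x}_k=2\rho P_kA_k(\bone_{n_k}\otimes y^\star_k)$ decouples edge pair by edge pair into $\hat{x}^{ij}_k+\hat{x}^{ji}_k=2\rho y^\star_k$, so the fully symmetric assignment $\hat{x}^{ij}_k=\rho y^\star_k$ on \emph{every} edge is feasible for \eqref{eq:TSI_existence}; no residual ``dual freedom'' on the remaining edges needs to be invoked (and indeed could not be, since an arriving agent has no remaining edges). Second, be aware that your chosen point is generally \emph{not} a genuine fixed point of the Peaceman--Rachford operator: a true fixed point must additionally satisfy $\bone_{n_k}\otimes y^\star_k=\prox_{f_k}^{1/\rho\eta_k}(D_kA_k^\top\hat{x}_k)$, which forces $\sum_{j}\hat{x}^{ij}_k-\rho\eta^i_ky^\star_k$ to encode a subgradient of $f^i_k$ at $y^\star_k$ and rules out $\hat{x}^{ij}_k\equiv\rho y^\star_k$ unless $y^\star_k$ minimizes every local cost. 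Your argument therefore leans on taking the affine characterization \eqref{eq:TSI_existence} at face value as the definition of $\hat{\X}_k$ --- but the paper's own proof makes exactly the same identification when it applies the affine-projection formula, so this is a shared feature of both arguments rather than a gap specific to yours.
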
\vspace{-0.5em}
\begin{proof}
    See Section B in the Appendix.
\end{proof}

\begin{algorithm}[!t]
\caption{ Open and distributed ADMM}
\label{alg:doot}
\begin{algorithmic}
\Require{The relaxation $\alpha \in (0, 1)$ and the penalty $\rho > 0$.}
\Ensure{Each agent returns $y^i_k$ that is an (approximated) solution to the optimization problem in~\eqref{eq:optprob}.}
\NoDo
\State \hspace{-1.3em} \textbf{for $k=0,1,2,\ldots$ each agent $i\in\V_k$:}
\If{$i\in \Va_{k}$ \textbf{is an arriving agent:}}
\State initializes the state variables to a local optimum
\begin{equation}\label{eq:init-x}
    x^{ij}_k = \rho y^{i,\star}_k \text{ (see \eqref{eq:opt-loc})}\qquad \forall j\in\N^i_k
\end{equation}
%
%
%
\ElsIf{$i\in\Vr_{k}$ \textbf{is a remaining agent:}}
\State{receives $y^j_{k-1}$, $x^{j,i}_{k-1}$ from each neighbor $j\in\R^i_{k}$}
\State updates the remaining state variable according to
\begingroup
\medmuskip=2mu
\thinmuskip=2mu
\thickmuskip=2mu
\begin{equation}\label{eq:admm-x}
     x^{ij}_k =  (1 - \alpha)  x^{ij}_{k-1} - \alpha  x^{ji}_{k-1} + 2\rho \alpha y^j_{k-1} \qquad \forall j\in\R^i_{k}
\end{equation}
\endgroup
\State initializes the new state variables to a local optimum
\begin{equation}\label{eq:init-x-2}
     x^{ij}_k = \displaystyle \rho y^{i,\star}_k \text{ (see \eqref{eq:opt-loc})}\qquad \forall j\in\A^i_{k}
\end{equation}
\EndIf
\State updates the output variable 
\begin{equation}\label{eq:admm-y}
y^i_{k} = \prox_{f^i_k}^{1/\rho \eta^i_k} \left( \frac{1}{\rho \eta^i_k} \sum_{j \in \N^i_{k}}  x^{ij}_{k} \right)
\end{equation}
\State transmits $y^i_k$, $x^{ij}_{k}$ to each neighbor $j\in\N^i_{k}$
\\
\hspace{-1em}\textbf{end for}
\end{algorithmic}
\end{algorithm}

\smallskip 

These intermediate lemmas allow proving convergence the convergence of \alg. In particular, Theorem~\ref{th:mainth_omas} (building on Theorem~\ref{th:mainth}) demonstrates that \alg is stable in the sense that the open sequences it generates converge within a radius of the set of interest.
After proving stability, we analyze the performance of \alg in Theorem~\ref{th:mainth_omas_opt}, where we prove that the open sequences $\{ y_k \ : \ k \in \nat \}$ converge linearly within a time-varying radius of the optimal solution sets $\{ \mathcal{C}_k^\star \ : \ k \in \nat \}$.
Finally, we provide Corollary~\ref{cor:mainth_omas} to characterize the performance in simplified scenarios.

\begin{thm}\label{th:mainth_omas}
    Consider an OMAS running \alg to distributedly solve an optimization problem as in~\eqref{eq:optprob} under Assumptions~\ref{as:graph}-\ref{as:all}.
    If the standard iteration is paracontractive with $\gamma\in (0,1)$ and the departure process is bounded with $\beta\in(\gamma,1)$, then the open sequence ${\{x_k:k\in\nat \}}$ generated by the open operator of \alg converges with linear rate ${\theta = \gamma/\beta\in (0,1)}$ within a radius 
    $$
    R = \rho\frac{(\sigma + \omega)}{(1-\theta)}.
    $$
\end{thm}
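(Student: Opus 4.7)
The plan is to recognize Theorem~\ref{th:mainth_omas} as a direct specialization of the abstract convergence result in Theorem~\ref{th:mainth} to the particular open operator induced by \alg. All the heavy lifting has already been done: Theorem~\ref{th:mainth} states that whenever the four hypotheses (a)--(d) on a generic open operator are satisfied, the normalized distance from the TSI decays linearly with rate $\theta=\gamma/\beta$ toward a radius $(B+H)/(1-\theta)$. Therefore the proof reduces to checking that each of these hypotheses holds for the open operator $\Ts_k$ induced by \alg, with explicit constants.

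First, I would make explicit that the update rule given in Algorithm~\ref{alg:doot} -- namely equations~\eqref{eq:admm-x} and~\eqref{eq:init-x}--\eqref{eq:init-x-2} -- matches exactly the component-wise form required by Theorem~\ref{th:mainth}: remaining components $(i,j)\in \Vr_k\times \R^i_k$ are updated via the standard ADMM operator $\Fs^{ij}_k$, while arriving components are initialized to the local optimum $\rho y^{i,\star}_k$. In particular the set of labels for the state space is $\I_k=\{(i,j):i\in\V_k,\ j\in\N^i_k\}$, and the collected operator $\Fs_k$ is the relaxed Peaceman--Rachford splitting operator applied to the dual of~\eqref{eq:optprob}. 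This identification is what permits us to invoke Theorem~\ref{th:mainth}.

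Next, I would verify the four assumptions in order. Hypothesis~(a), paracontractivity of $\Fs_k$ with constant $\gamma\in(0,1)$, is assumed directly in the statement. Hypothesis~(b), bounded variation of the TSI with constant $B=\rho\sigma$, is precisely the content of Lemma~\ref{lem:DOOT_TSI}, which uses Assumption~\ref{as:all}(iii) (bounded drift of the global minimizers). Hypothesis~(c), boundedness of the arrival process with $H=\rho\omega$, is precisely the content of Lemma~\ref{lem:DOOT_BAR}, which in turn uses Assumption~\ref{as:all}(iv) (bounded gap between local and global minimizers) together with the arrival initialization~\eqref{eq:init-x}. Hypothesis~(d), boundedness of the departure process with $\beta\in(\gamma,1)$, is assumed directly in the statement.

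With all four hypotheses verified, Theorem~\ref{th:mainth} yields the linear convergence of the open sequence $\{x_k\}$ with rate $\theta=\gamma/\beta$ to within radius
$$
R \;=\; \frac{B+H}{1-\theta} \;=\; \frac{\rho\sigma+\rho\omega}{1-\theta} \;=\; \rho\,\frac{\sigma+\omega}{1-\theta},
$$
which is exactly the claimed bound. I expect no real obstacle here, since the work has been distributed across Theorem~\ref{th:mainth} and the two preparatory lemmas; the only thing worth a few lines of commentary is confirming that the labeling convention for \alg (state variables indexed by directed edges $(i,j)$ rather than by nodes) is consistent with the open-operator framework of Section~\ref{sec:OOT}, so that the constants $B$, $H$, $\beta$ referring to the OMAS are the same objects as those appearing in Theorem~\ref{th:mainth}.
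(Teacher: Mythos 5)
Your proposal is correct and follows essentially the same route as the paper: the paper's proof likewise just verifies hypotheses (a)--(d) of Theorem~\ref{th:mainth}, citing Lemma~\ref{lem:DOOT_TSI} for $B$ and Lemma~\ref{lem:DOOT_BAR} for $H$, and substitutes into~\eqref{eq:radius}. If anything, your constants $B=\rho\sigma$ and $H=\rho\omega$ are the ones consistent with the lemma statements and the claimed radius, whereas the paper's proof text carries a stray $1/\sqrt{p}$ and a typo ($H=\rho\sigma/\sqrt{p}$ instead of $\rho\omega$).
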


\begin{proof}
    The proof consists in showing that conditions ${(a)-(d)}$ of Theorem~\ref{th:mainth} hold, indeed: Paracontractivity of the operator ruling the standard dynamics holds by assumption with $\gamma\in(0,1)$;  Boundedness of the variation of the TSI holds by Lemma~\ref{lem:DOOT_TSI} with $B=\rho\sigma/\sqrt{p}$;  Boundedness of the arrival process holds by Lemma~\ref{lem:DOOT_BAR} with $H=\rho\sigma/\sqrt{p}$;  Boundedness of the departure process holds by assumption with $\beta \in(\gamma,1)$.
    The thesis follows by substituting $H$ and $B$ into \eqref{eq:radius}.
\end{proof}

\begin{thm}\label{th:mainth_omas_opt}
    In the scenario of Theorem~\ref{th:mainth_omas}, the open sequence of agents' estimates ${\{y_k:k\in\nat \}}$ converges linearly to the consensus set on the optimal solutions denoted by $\mathcal{C}^\star_k := \{\bone_{n_k}\otimes y^\star_{k} \ \mid \ y^\star_{k}\in\Y^\star_k\}$,  within a bound $\Delta_k$,
    $$
     \limsup_{k\rightarrow \infty} \ \frac{d(y_k,\C^\star_k)}{\sqrt{pn_k}}  = \frac{R}{\rho}\sqrt{n_k}:=\Delta_k.
    $$
\end{thm}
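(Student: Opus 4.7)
The plan is to reduce the convergence statement for the estimate sequence $\{y_k\}$ to the already-established Theorem~\ref{th:mainth_omas} on the state sequence $\{x_k\}$. The bridge between the two is the proximal update \eqref{eq:admm-y} that defines $y^i_k$ from the local neighborhood of $x_k$, combined with a characterization of what the proximal step yields when evaluated on a point of the TSI.

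First, I would invoke Theorem~\ref{th:mainth_omas} to obtain the linear convergence of $\{x_k\}$ to $\{\hat{\X}_k\}$ within radius $R = \rho(\sigma+\omega)/(1-\theta)$. Next, I would establish the key fixed-point identity: for any $\hat{x}_k \in \hat{\X}_k$ satisfying \eqref{eq:TSI_existence}, the point obtained by plugging $\hat{x}_k$ into the update \eqref{eq:admm-y} is precisely a consensus vector on an element of $\Y^\star_k$, i.e., $\bone_{n_k}\otimes y^\star_k \in \C^\star_k$. This is the standard fixed-point property of the relaxed Peaceman--Rachford splitting applied to the dual of~\eqref{eq:optprob} (as cited via \cite{Bastianello21} in Section~\ref{sec:DOOT-admm}), and follows directly from the structure of $\hat{\X}_k$ stated in Lemma~\ref{lem:DOOT_TSI}.

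With this identity in hand, I would exploit the nonexpansiveness of the proximal operator. Letting $\hat{x}_k = \proj(x_k,\hat{\X}_k)$ and applying \eqref{eq:admm-y} with both $x_k$ and $\hat{x}_k$, nonexpansiveness of $\prox_{f^i_k}^{1/\rho\eta^i_k}$ gives
\begin{equation*}
\|y^i_k - y^\star_k\|^2 \;\leq\; \Big\|\tfrac{1}{\rho\eta^i_k}\textstyle\sum_{j\in\N^i_k}(x^{ij}_k - \hat{x}^{ij}_k)\Big\|^2 \;\leq\; \tfrac{1}{\rho^2\eta^i_k}\textstyle\sum_{j\in\N^i_k}\|x^{ij}_k-\hat{x}^{ij}_k\|^2,
\end{equation*}
where the second inequality is Cauchy--Schwarz. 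Summing over $i\in\V_k$ and using $\eta^i_k\geq 1$ (which holds under connectivity, Assumption~\ref{as:graph}), I obtain
\begin{equation*}
d(y_k,\C^\star_k)^2 \;\leq\; \|y_k - \bone_{n_k}\otimes y^\star_k\|^2 \;\leq\; \tfrac{1}{\rho^2}\,\|x_k-\hat{x}_k\|^2 \;=\; \tfrac{1}{\rho^2}\,d(x_k,\hat{\X}_k)^2.
\end{equation*}

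Finally, I would translate the normalization of Definition~\ref{def:conv_os} into the $(p n_k)$-normalization used in the statement: taking square roots, dividing by $\sqrt{p n_k}$ and multiplying/dividing by $\sqrt{|\I_k|}$ yields $d(y_k,\C^\star_k)/\sqrt{p n_k} \leq (1/\rho)\sqrt{|\I_k|/(p n_k)}\cdot d(x_k,\hat{\X}_k)/\sqrt{|\I_k|}$, so that taking $\limsup_{k\to\infty}$ and invoking Theorem~\ref{th:mainth_omas} gives the claimed bound $\Delta_k = (R/\rho)\sqrt{n_k}$ once the worst-case count $|\I_k|\leq p n_k^2$ (valid for any undirected graph) is substituted. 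The main obstacle I anticipate is not the operator-theoretic chain itself, which is largely mechanical once the fixed-point characterization is in place, but rather the careful bookkeeping of the dimension ratios $|\I_k|/(p n_k)$ and the per-agent degree factors $1/\eta^i_k$: different graph topologies lead to different constants, and making the $\sqrt{n_k}$ scaling tight (rather than just an upper bound) requires identifying the extremal graph structure realized in the worst case.
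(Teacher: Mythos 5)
Your proposal is correct and follows essentially the same route as the paper: invoke the fixed-point identity $\Ps_k(\hat{x}_k)=\bone_{n_k}\otimes y^\star_k$, use nonexpansiveness of the proximal to get $d(y_k,\C^\star_k)\leq \tfrac{1}{\rho}d(x_k,\hat{\X}_k)$, and then convert normalizations via $\abs{\I_k}=p\,\xi_k\leq p\,n_k^2$; the only cosmetic difference is that you bound the degree-weighted averaging component-wise with Cauchy--Schwarz where the paper computes the operator norm of $\rho D_kA_k^\top$ directly. Your closing caveat is also apt: the paper's own argument likewise only establishes an upper bound, so the ``$=$'' in the theorem statement should really be ``$\leq$''.
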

\begin{proof}
    See Section C in the Appendix.
\end{proof}

\begin{cor}\label{cor:mainth_omas}
Consider the scenario of Theorem~\ref{th:mainth_omas} and the following simplified cases:
\begin{itemize}
    \item[$(a)$] The network size is limited from above;
    \item[$(b)$] The network is time-invariant and not open;
    \item[$(c)$] The local costs are time-invariant.
\end{itemize}
Then, the result of Theorem~\ref{th:mainth_omas_opt} become:
\begin{itemize}
    \item $(a)$ implies that the tracking error can be upper bounded by $\Delta=\frac{\sqrt{n}(\sigma+\omega)}{(1-\gamma)}$ where $n_k\leq n$;
    \item $(b)$ implies that the convergence rate is equal to the paracontractivity constant $\theta=\gamma$ because $\beta=1$, which implies that the tracking error reduces to $\Delta=\frac{\sqrt{n}\sigma}{(1-\gamma)}$ because $\omega=0$, where $n_k= n$;
    \item $(b)\land(c)$ implies that the network achieves exact consensus on the optimal solution for any initial condition, i.e. the tracking error becomes null because $\sigma=0$.
\end{itemize}
\end{cor}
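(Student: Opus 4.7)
The plan is to derive each case as a direct specialization of the asymptotic estimate from Theorem~\ref{th:mainth_omas_opt}, namely
$$
\limsup_{k\to\infty}\frac{d(y_k,\C^\star_k)}{\sqrt{p\,n_k}} \;=\; \frac{\sqrt{n_k}\,(\sigma+\omega)}{1-\gamma/\beta} \;=\; \Delta_k,
$$
obtained by combining $\Delta_k=R\sqrt{n_k}/\rho$ with $R=\rho(\sigma+\omega)/(1-\theta)$ and $\theta=\gamma/\beta$ from Theorem~\ref{th:mainth_omas}. No new operator-theoretic analysis is needed: each of the three simplifications just tracks which of the structural constants $n_k$, $\beta$, $\omega$, $\sigma$ collapses, by re-reading Definition~\ref{def:bounded-departure}, Lemma~\ref{lem:DOOT_BAR}, and Assumption~\ref{as:all}(iii)--(iv) in the appropriate regime.

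For case (a), I would simply bound $\sqrt{n_k}$ uniformly by $\sqrt{n}$. Because the network is non-empty and uniformly bounded in size, the ratio $\sqrt{|\V_k|}/\sqrt{|\V_{k-1}|}$ is bounded away from zero; combined with the absence of unbounded growth, Definition~\ref{def:bounded-departure} is compatible with taking $\beta=1$, which gives $\theta=\gamma$ and hence $\Delta=\sqrt{n}(\sigma+\omega)/(1-\gamma)$.

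For case (b), a time-invariant closed network has $\V_k=\V$, so $\Va_k=\Vd_k=\emptyset$ for all $k$. The departure process is then trivially bounded with $\beta=1$ (whence $\theta=\gamma$), and the arrival-process bound of Lemma~\ref{lem:DOOT_BAR} is vacuous, forcing the constant $H=\rho\omega$ to be zero; equivalently one can take $\omega=0$ in Assumption~\ref{as:all}(iv), since no new local optima are ever injected. Substituting $\omega=0$ and $n_k=n$ into the master estimate collapses the radius to $\Delta=\sqrt{n}\,\sigma/(1-\gamma)$.

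For (b)$\land$(c), adding time-invariant local costs on top of a closed network makes the global objective $\sum_{i\in\V} f^i(\cdot)$ independent of $k$, so $\Y^\star_k\equiv\Y^\star$ and Assumption~\ref{as:all}(iii) is met with $\sigma=0$. Combined with $\omega=0$ from case (b) this yields $\Delta=0$, i.e.\ $d(y_k,\C^\star)\to 0$, recovering the exact-convergence conclusion of the standard closed-network ADMM and matching the final item of Corollary~\ref{cor:mainth} for the underlying open-operator iteration. The main obstacle I anticipate is case~(a): the hypothesis $n_k\leq n$ alone does not mechanically force $\beta=1$ in Definition~\ref{def:bounded-departure}, so some care is required to justify replacing $1-\gamma/\beta$ by $1-\gamma$ in the denominator. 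Once this interpretation is fixed, the rest of the corollary is a short chain of substitutions into the master inequality, with no additional analytical content beyond what Theorem~\ref{th:mainth_omas_opt} already supplies.
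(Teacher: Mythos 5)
Your overall strategy---treating the corollary as a chain of substitutions into the master estimate $\Delta_k=\sqrt{n_k}(\sigma+\omega)/(1-\gamma/\beta)$ from Theorem~\ref{th:mainth_omas_opt}---is exactly what the paper intends; the corollary is stated without a separate proof precisely because it is meant to follow by specializing the constants $n_k$, $\beta$, $H$, $B$. Your treatment of cases $(b)$ and $(b)\land(c)$ is correct: a closed network makes the departure bound hold with $\beta=1$ (so $\theta=\gamma$) and makes the arrival bound vacuous (so $H=0$, which is what the paper loosely phrases as ``$\omega=0$''), and adding static costs makes $\Y^\star_k$ constant so that $B=\rho\sigma=0$ and the radius collapses to zero, matching Corollary~\ref{cor:mainth}.

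The one place where your argument does not close is the patch you attempt for case $(a)$, and you are right to flag it yourself at the end. The claim that a uniformly bounded, non-empty network is ``compatible with taking $\beta=1$'' is false: Definition~\ref{def:bounded-departure} with $\beta=1$ demands $\abs{\I_k}\geq\abs{\I_{k-1}}$ for \emph{every} $k$, i.e.\ a non-decreasing state dimension, and a bounded network that ever loses an edge or a node without immediate replacement violates this. Being bounded away from zero only yields some $\beta_0\in(0,1)$, not $\beta=1$. What the hypothesis $n_k\leq n$ actually delivers, via Theorem~\ref{th:mainth_omas_opt}, is $\Delta\leq\sqrt{n}(\sigma+\omega)/(1-\theta)$ with $\theta=\gamma/\beta>\gamma$; since $1/(1-\theta)>1/(1-\gamma)$, the bound printed in the corollary for case $(a)$ is strictly \emph{tighter} than what the theorem supplies and cannot be derived from it without the extra assumption $\beta=1$. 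The discrepancy lies in the statement (the denominator should read $1-\theta$, or case $(a)$ should be read as inheriting the closed-network hypothesis), not in your derivation; you should state the $(1-\theta)$ version rather than manufacture $\beta=1$.
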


\subsection{Discussion on the set-up and proposed algorithm}
Let us discuss the set-up and Algorithm~\ref{alg:doot} from the perspective of learning. In this context, each agent has access to data sampled from a local distribution $\mathcal{D}_i$, which are then used to define the local cost function $f^i_k$ as
$$
    f_k^i(y) = \frac{1}{m_k^i} \sum_{h = 1}^{m_k^i} \ell(y; d_{k,h}^i)
$$
where $\{ d_{k,h}^i \}_{h = 1}^{m_k^i}$ are the local data, and $\ell$ is a loss function.
In principle, each agent could compute its model on the local data only, i.e., $y^{i,\star}_k \in\Y^{i,\star}_k$ (which always exists according to Assumptions \ref{as:all}(i)-(ii)). However, this model in general can have \textit{poor accuracy and generalization}. The poor accuracy is due to the limited amount of data that an agent can collect and store, while the poor generalization is due to the skewed/biased perspective that the local distribution $\mathcal{D}_i$ has of the phenomenon being analyzed.
Thus, each agent has an incentive to participate in the cooperative learning process to train a model that is more accurate (all data of all agents are involved) and more general (all distributions together offer a better perspective on the phenomenon).
This interpretation then motivates our choice to initialize an agent's variables with the local minimizer $y^{i,\star}_k $. That is, we assume that an agent has pre-trained its local model as best as possible with the given data, and then joins the network to cooperatively refine this model into a more accurate and general one.

The agents have therefore an incentive to join the cooperative learning to improve their local model -- but practical constraints may limit their participation. For instance, in the case of networks of battery-powered devices -- such as smartphones -- then, depending on the battery level, an agent may choose to stop participating to preserve battery, and join again only once its charge is restored. An agent could also decide to leave the cooperative learning once the trained model exceeds a high enough test accuracy on its local distribution.
However, in principle the joining/leaving of agents might disrupt the cooperative learning, resulting in trained models with poor accuracy. Thus, some assumptions on the time-variability of~\eqref{eq:optprob} are needed, in order to delineate a solvable open learning problem.

First of all, through Assumption \ref{as:all}(iii) we impose a bound to the variation of the global solutions over time. This implies that changes in the participating agents and their local dataset do not significantly disrupt the optimal trained model. As a consequence, the model trained until time $k-1$ is a good starting point for training the new model at time $k$.
Secondly, we need to guarantee that suitable bounds on the departure and arrival process are satisfied.
Assumption \ref{as:all} ensures that Definition~\ref{def:bounded-arrival} holds by requiring that local models (which are the initializations of arriving agents) are not too far from the current best global model. This translates in asking that the local distributions should have, in a sense, a bounded distance. This still allows for heterogeneity of the local data, but limits the level of the heterogeneity in the local models.
In a similar way, the leaving of an agent may drive away the current global solution to the local ones, thus requiring to assume that there is a bounded rate of departure in Theorem \ref{th:mainth_omas}. 

\begin{table*}[!ht]
\centering

\caption{Comparison with the state of the art for distributed tracking (also called dynamic consensus)\\ in open multi-agent systems with discrete-time dynamics.}
\label{tab:comparison_consensus}
    \begin{tabular}{lcccccccc}
    \hline
    [Ref.] & Problem  & \thead{Assumptions\\on the network} & \thead{Assumptions\\on the signals} & \thead{Bounded \\ tracking error} & \thead{Null \\steady-state error} \\
    \hline
    
    \cite{Deplano24stability} Deplano \textit{et al.} (2024) & Max/Min  & \thead{Undirected + Connected +\\Bounded diameter + Slowly varying} & \thead{Bounded variation +\\Bounded span} & \cmark & \thead{\mmark\\ (arbitrarily small)} \\

    \cite{abdelrahim2017max} Abdelrahim \textit{et al.} (2017)& Max/Min & \thead{Complete +\\ Eventually closed} & Constant & \xmark & \cmark  \\

    \cite{zhu2010discrete} Zhu \textit{et al.} (2010) & Avg &  \thead{Directed + Balanced +\\ jointly strongly connected 
    } &  \thead{Relative\\ bounded variation} & \xmark & \xmark  \\

    \cite{franceschelli2020stability} Franceschelli \textit{et al.} (2020)& Avg & \thead{Undirected + Connected +  \\ Bounded departures} &  \thead{Bounded variation +\\Bounded span} &\cmark & \xmark  \\

    \cite{dashti2022distributed} Dashti \textit{et al.} (2022) & Avg/Mode  &  \thead{Undirected + Connected +\\ Eventually closed} & Constant & \xmark & \cmark  \\

    \cite{makridis2024average} Makridis \textit{et al.} (2024) & Avg &  \thead{Directed + Strongly connected +\\ Eventually closed} & Constant & \xmark & \cmark  \\

    \cite{oliva2023sum} Oliva \textit{et al.} (2023) & Avg &  \thead{Undirected + Connected +\\ Eventually closed} & Constant & \xmark & \cmark  \\

    \hline

    \hline

    [\textbf{This work}] & \thead{Avg, Max/Min, \\Median, $\cdots$} &  \thead{Undirected + Connected + \\ Bounded departures} & \thead{Bounded variation +\\Bounded span}  & \cmark & \cmark  \\
    \hline
    \end{tabular}

\end{table*}

\section{Applications and numerical simulations}\label{sec:simu}
In this section, we demonstrate the performance of the \alg algorithm proposed in Section \ref{sec:algorithm} by showcasing its application to tracking and classification problems in open multi-agent systems.

\subsection{Consensus algorithms for open networks}\label{sec:simu_dc}
Consider an OMAS in which the $i$-th agent with $i\in\V_k$ has access to a scalar, time-varying reference signal ${u^i_k\in\mathbb{R}}$. 
The dynamic consensus problems on the average, the maximum, and the median values of these signals can be recast as a time-varying optimization problem in~\eqref{eq:optprob} with local cost functions -- satisying Assumption~\ref{as:all}(i)-(ii) -- given by
\begin{equation}\label{eq:dyn_cons}
f^i_k(y):= \frac{1}{q}\abs{y-u^i_k}^q + \iota_{\S^i_k}(y)
\end{equation}
where $\iota_{\S^i_k}:\rea^n\rightarrow\rea^n\cup\{+\infty\}$ is the indicator function defined as $\iota_{\S^i_k}(y) = 0$ if $y \in \S^i_k$, and $\iota_{\S^i_k}(y)=+\infty$ otherwise.
Indeed, we have the following result.
\begin{prop}\cite[Proposition 1]{deplano2023unified}\label{prop:diffprob}
Consider an open network ${\G_k=(\V_k,\E_k)}$ distributedly solving the optimization problem in~\eqref{eq:optprob} with local costs in~\eqref{eq:dyn_cons} under Assumption \ref{as:graph}. Then, there is a unique solution $y^*_k\in\Y^\star_k$ to the problem such that:
\begin{itemize}
    \item[i)] If $q=2$ and $\S^i_k=\rea$, then $y^\star_k=\avg({u_k})$;
    \item[ii)] If $q=2$ and $\S^i_k=\{x\geq u_i(k)\}$, then $y^\star_k=\max({u_k})$;
    \item[iii)] If $q=1$ and $\S^i_k=\rea$, then $y^\star_k=\med({u_k})$.
\end{itemize}
\end{prop}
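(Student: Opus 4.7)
The plan is to verify each of the three cases separately, since the minimization reduces in each setting to a single-variable convex problem (scalar because $y\in\rea^\P$ with $\P$ a singleton in this section, convex by Assumption~\ref{as:all}(i)). In all three cases I would first argue coercivity of the aggregate objective $\sum_{i\in\V_k} f^i_k$, which together with proper lower-semicontinuity gives $\Y^\star_k\neq\emptyset$, and then characterize the minimizer via first-order optimality conditions. Strict convexity of the quadratic cost handles uniqueness in cases (i) and (ii); case (iii) needs extra care.

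For case (i), with $q=2$ and $\S^i_k=\rea$, the objective $\sum_{i\in\V_k}\tfrac{1}{2}(y-u^i_k)^2$ is strictly convex, so differentiating and solving $\sum_{i\in\V_k}(y-u^i_k)=0$ yields the unique optimizer $y^\star_k=\avg(u_k)$. For case (ii), the same quadratic loss is coupled with $\S^i_k=\{y:y\geq u^i_k\}$, so the aggregated feasible set is $\{y:y\geq \max(u_k)\}$. Since the unconstrained minimum $\avg(u_k)$ lies to the left of $\max(u_k)$ and the quadratic sum is strictly increasing on $[\avg(u_k),\infty)$, the constrained problem attains its unique minimum at the boundary $y^\star_k=\max(u_k)$; this is equivalently recovered via the KKT conditions, with only the constraint associated with an arg-max index being active.

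For case (iii), with $q=1$ and $\S^i_k=\rea$, the problem is the classical $\ell_1$-regression of a scalar on $\{u^i_k\}_{i\in\V_k}$. I would invoke the subdifferential optimality condition $0\in\partial\sum_{i\in\V_k}|y-u^i_k|$, using $\partial|y-u^i_k|=\{\sign(y-u^i_k)\}$ for $y\neq u^i_k$ and $\partial|0|=[-1,1]$. Requiring the sum to contain zero balances the number of reference signals strictly above and below $y$, which is precisely the characterization of a median, giving $y^\star_k=\med(u_k)$.

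The main obstacle is the uniqueness claim in case (iii) when $\abs{\V_k}$ is even, where the set of medians is a closed interval rather than a singleton. Handling this rigorously requires adopting a fixed convention for $\med(\cdot)$ (e.g., lower median or midpoint) consistent with the one used elsewhere in the paper and in the cited reference \cite{deplano2023unified}; once such a convention is specified, the uniqueness assertion is interpreted as the selection of that specific representative, and the rest of the argument goes through verbatim.
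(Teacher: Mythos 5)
The paper does not prove this proposition itself; it imports it verbatim from \cite[Proposition 1]{deplano2023unified}, so there is no in-paper argument to compare against. Your verification is the standard one and is correct: coercivity plus lower semicontinuity for existence, stationarity of the strictly convex quadratic for case (i), reduction of the aggregated constraint set to $\{y\geq\max(u_k)\}$ together with monotonicity of the quadratic to the right of $\avg(u_k)$ for case (ii), and the subdifferential balance condition $0\in\sum_{i\in\V_k}\partial\abs{y-u^i_k}$ for case (iii). Note that Assumption~\ref{as:graph} (connectivity) plays no role in this argument --- it matters only for the distributed solvability, not for characterizing $\Y^\star_k$ --- which is consistent with how the proposition is used.

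Your caveat on case (iii) is a genuine and worthwhile observation: for even $\abs{\V_k}$ the set of minimizers of $\sum_i\abs{y-u^i_k}$ is a closed interval, so the ``unique solution'' claim holds only under a fixed selection convention for $\med(\cdot)$ (as adopted in \cite{deplano2023unified}). Without such a convention the statement as written is slightly too strong, and this also interacts with the later use of the proposition, since Assumption~\ref{as:all}(iii) bounds the variation of the global solution: a set-valued $\Y^\star_k$ is in fact permitted there, but the tracking claim $y^\star_k=\med(u_k)$ needs the convention to be meaningful. With that convention fixed, your argument is complete.
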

The updates of the \alg for the tracking (or alternatively, dynamic consensus) problems of the average, maximum, and median values can be written in closed-form. In particular, the initialization of the new state variables in eqs.~\eqref{eq:init-x} and~\eqref{eq:init-x-2} becomes ${x^{ij}_k = \rho u^i_k}$ because the local optimal solution is unique and equal to $y^{i,\star}_k = u^i_k$; the updates of the state variables in~\eqref{eq:admm-x} remain unchanged;  the updates of the output variables in~\eqref{eq:admm-y} becomes (cfr.~\cite[Lemmas 1-2-3]{deplano2023unified}):
\begingroup
\medmuskip=1mu
\thinmuskip=1mu
\thickmuskip=1mu
$$
\begin{aligned}
    \text{(average)} && y^i_k =& \displaystyle \frac{u^i_k + \sum_{j \in \N^i_k} x^{ij}_{k}}{1 + \rho \eta^i_k},\\
    \text{(maximum)} && y^i_k =& \max\left\{u^i_k,\frac{u^i_k + \sum_{j \in \N^i_k} x^{ij}_{k}}{1 + \rho \eta^i_k}\right\},\\
    \text{(median)} && y^i_k =& u^i_k + \max\{\theta^{i,-}_k -u^i_k,0\} + \min\{\theta^{i,+}_k -u^i_k,0\},\\
    && &\text{with } \theta^{i,\pm}_k = \frac{1}{\rho \eta^i_k}\Big[\sum_{j\in\N^i_k}x^{ij}_{k-1}\pm 1\Big].
\end{aligned}
$$
\endgroup
It can also be verified that the general conditions required by Assumption~\ref{as:all}(iii)-(iv) hold if the reference signals $u^i_k\in\rea$ with $i\in\V_k$ are such that~\cite{Deplano21,deplano2023unified}:
\begin{itemize}
 \item their absolute variation is bounded by a constant $\sigma\geq 0$,
 \begin{equation}\label{eq:bounded_remu}
 \abs{u^i_{k}-u^i_{k-1}} \leq \sigma, \qquad \forall k\geq 0.
 \end{equation}
 \item they lie within a set of size $\omega\geq 0$,
 \begin{equation}\label{eq:bounded_openu}
 \abs{\bar{u}_{k}-\ubar{u}_k} \leq \omega,\qquad \forall k\geq 0.
 \end{equation}
\end{itemize}

\vspace{-1em}

\begin{rem}[Comparison with the state-of-the-art]\label{rem:sota_dc}
{A~comparison of the working assumptions of the proposed protocols derived from \alg and their performance with the state-of-the-art is detailed in Table \ref{tab:comparison_consensus}.
The only algorithms accounting for directed communications are those provided in \cite{zhu2010discrete} and in \cite{makridis2024average}, but their tracking error is not formally characterized. This is the most common case as the network is usually assumed to be eventually closed and the algorithm is characterized only at steady state. In contrast, the proposed \alg and the algorithms proposed in \cite{Deplano24stability}, \cite{franceschelli2020stability} work under the stronger assumption of undirected communications, but enjoy an eventually bounded tracking error. Moreover, the proposed \alg is the only algorithm guaranteeing a null steady state error while accounting for time-varying reference signals, whereas that in \cite{Deplano24stability} can be made arbitrarily small.
} \vspace{-0.5em}
\end{rem}

Figure~\ref{fig:DOOT_avg_res} shows a typical realization of a network of agents running the proposed \alg algorithm and compare it with the OPDC algorithm~\cite{franceschelli2020stability} in the scenario described next.
We considered as tuning parameters $\rho = 0.5$, $\alpha=0.99$ for \alg and $\alpha=\varepsilon=0.01$ for OPDC. 
The network starts with $200$ agents whose state is initialized uniformly at random in the interval $[0,500]$.
The initial graph is randomly generated with edge probability $p = 0.1$.
At any subsequent step $k\geq 0$, there is a probability $p\join_{k}\in[0,1]$ that one node joins the network and there is a probability ${p\leave_{k}\in[0,1]}$ that one node leaves the network, selected as follows: 
\begingroup
$$
[p\join_k,p\leave_k]=    
\begin{cases}
[1\%,\:1\%] & \text{if } k \leq 1000\\
[10\%,\:1\%] & \text{if } k\in (1000, 2000]\\
[1\%,\:1\%] & \text{if } k\in (2000,3000]\\
[1\%,\:10\%] & \text{if } k\in (3000, 3500]\\
[5\%,\:5\%] & \text{if } k>3500 
\end{cases}.
$$
\endgroup
We model these events in a way that the network remains connected whichever event occurs.
\begin{figure}[!b]
    \centering
    \vspace{-2em}
    \includegraphics[width=0.98\linewidth]{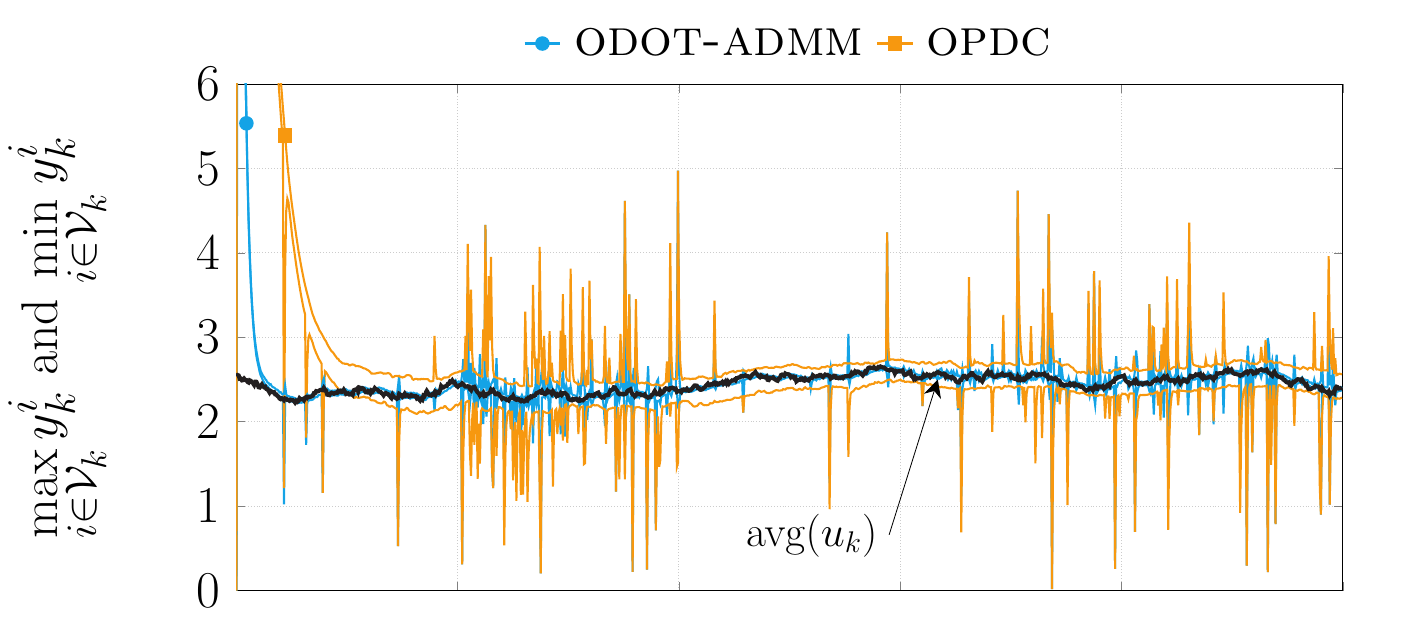}
    \includegraphics[width=0.95\linewidth]{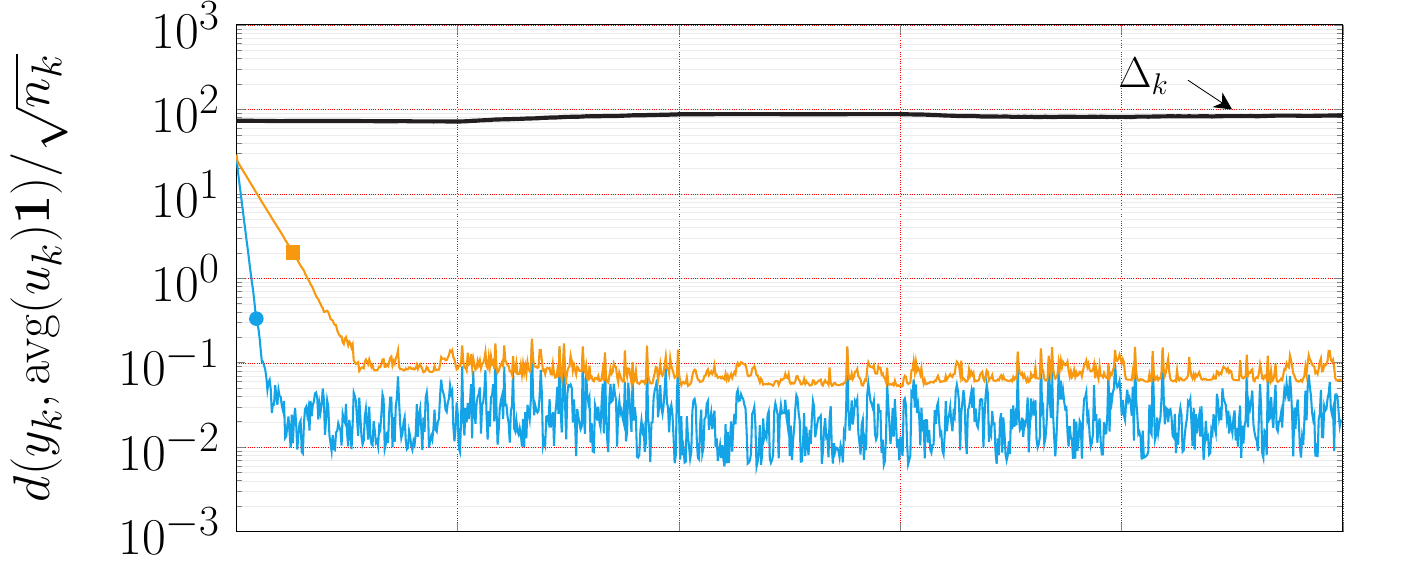}
    \includegraphics[width=0.98\linewidth]{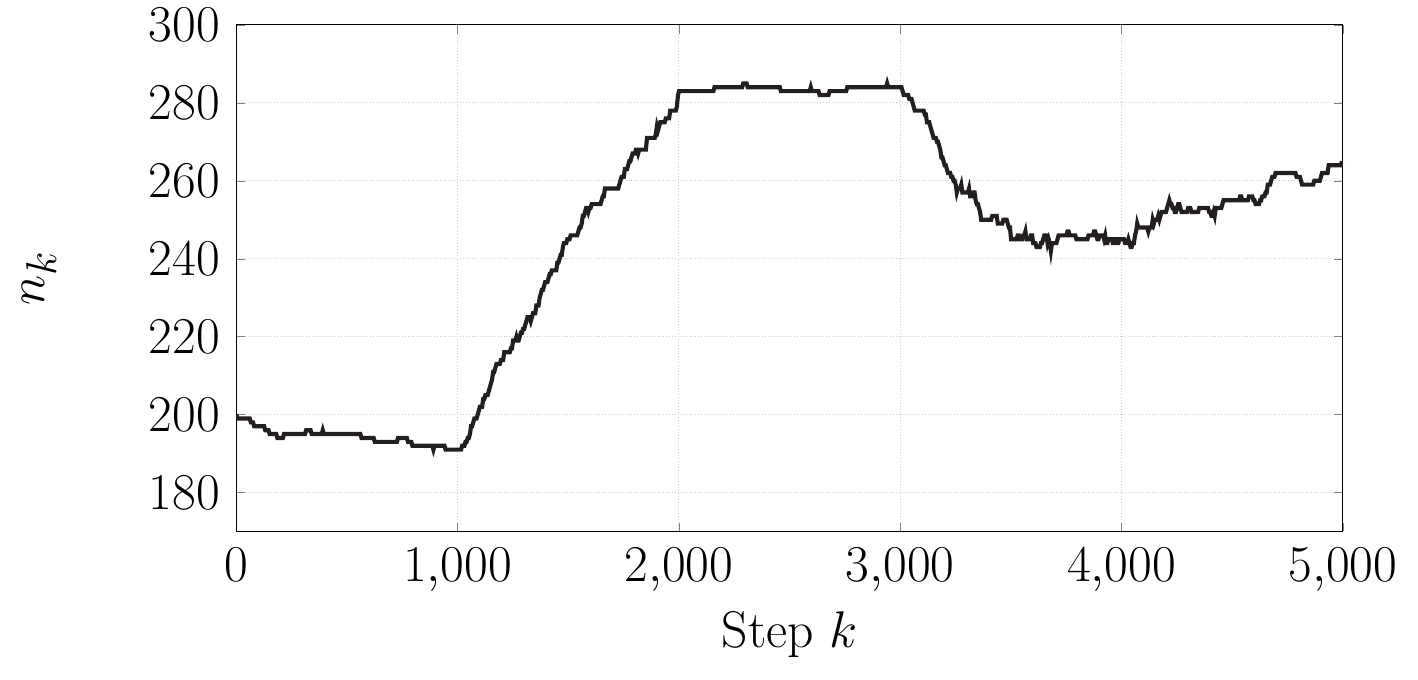}\vspace{-0.5em}
    \caption{Comparison between \alg and OPDC~\cite{franceschelli2020stability} in an open network: (top) evolution of the network state estimation $y_k$ and (middle) its normalized distance from the consensus on the average $\avg(u_k)$ of the reference signals -- converging within a bound $\Delta_k$ according to Theorem~\ref{th:mainth_omas} -- during the (bottom) time-varing size of the network.}
    \label{fig:DOOT_avg_res}
\end{figure}
Consequently, the set of network agents is frequently renewed and the number of agents changes according to Fig.~\ref{fig:DOOT_avg_res}(bottom).

The arriving agent creates random communication channels with all other agents with probability $p=0.1$.
Input reference signals are randomly sampled in the interval $[0, 5]$ when agents join the network and evolve within this set with bounded variation given by $\sigma=0.2$, yielding $\omega=5$.
Figure~\ref{fig:DOOT_avg_res}(top) shows the highest and the lowest agents' state estimations $y^i_k$ and the objective signal to be trakced, that is the average $\avg(u_k)$ of the reference signals. Figure~\ref{fig:DOOT_avg_res}(middle) shows the normalized distance of the whole network state $y_k= [y^1_k,\cdots,y^{n_k}_k]^\top$ from the consensus on the average. One can verify that the convergence of both protocols is linear and approaches a bounded tracking error. In this simulation, \alg outperforms OPDC both for the faster convergence rate and the smaller tracking error, which is bounded coherently with Theorem~\ref{th:mainth_omas_opt}.
In particular, we considered the best possible convergence radius $R=\rho(\sigma+\omega)=0.52$ by letting $\gamma=0$, thus yielding the bound $\Delta_k=5.2\sqrt{n_k}$. It is evident that this bound on the tracking error is a quite conservative estimate of the actual error.

\begin{figure}[!b]
    \centering
    \includegraphics[width=0.95\linewidth]{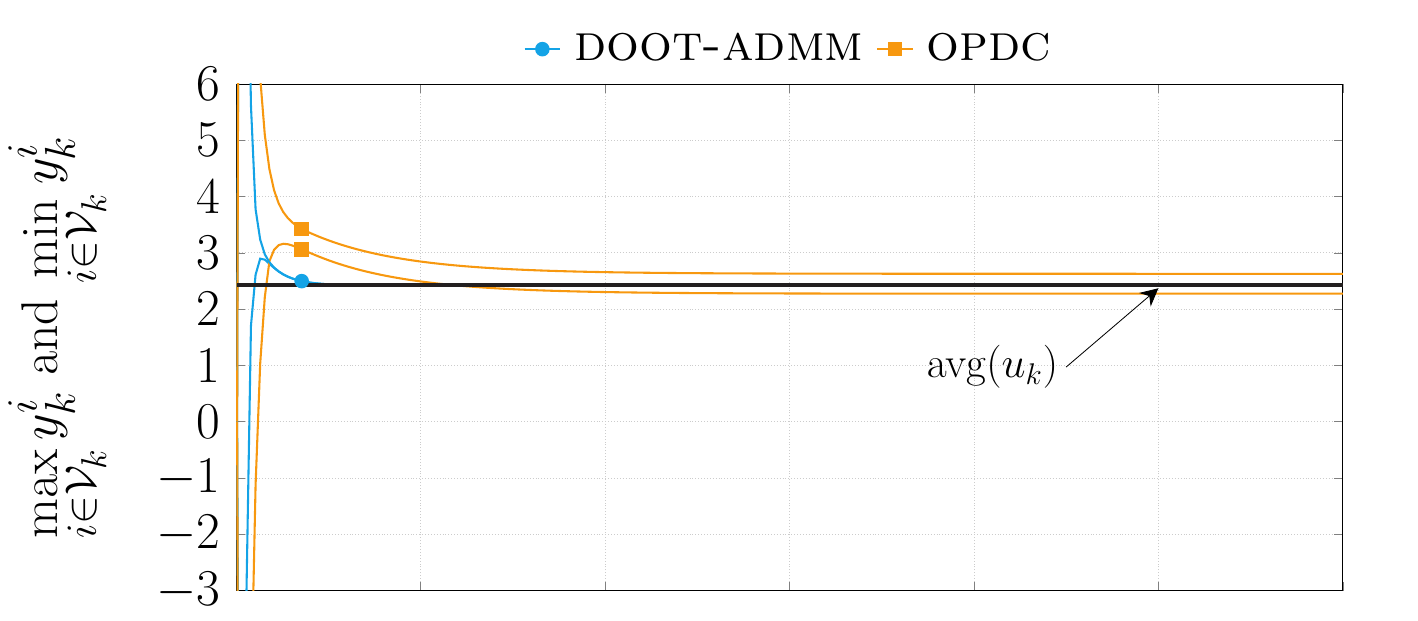}
    \includegraphics[width=0.98\linewidth]{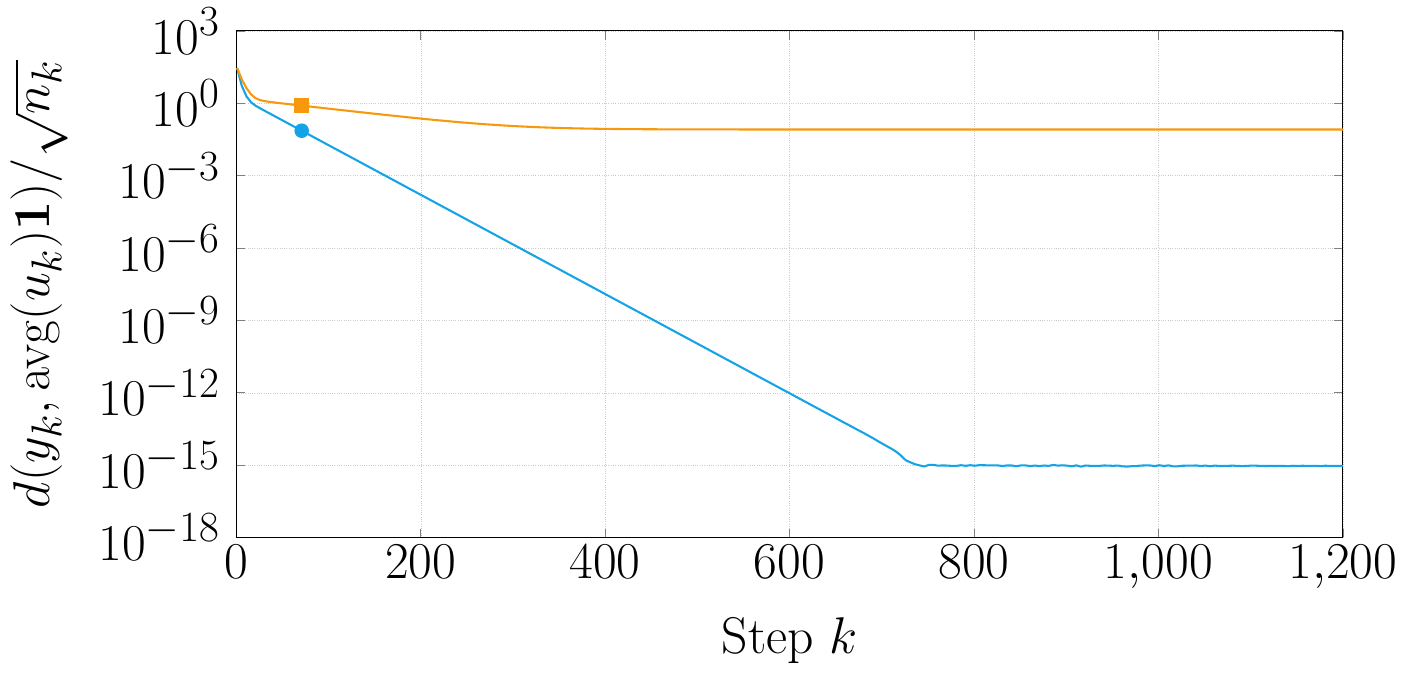}
    \caption{Comparison between DOT-ADMM and PDC in~\cite{franceschelli2020stability} in a closed network: (top) evolution of the network state estimation $y_k$ and (middle) its normalized distance from the consensus on the average $\avg(u_k)$ of the reference signals -- converging to zero according to Corollary~\ref{cor:mainth_omas}.}
    \label{fig:DOOT_avg_res_c}
\end{figure}

Figure~\ref{fig:DOOT_avg_res_c} shows the behavior of the two algorithms when the network is closed and fixed with $n=200$ agents and the reference signals are static: in this case, the two algorithms are called DOT-ADMM~\cite{bastianello2024robust} and PDC~\cite{franceschelli2020stability}. It can be noticed that DOT-ADMM is able to converge to the desired average value of the reference signal -- up to machine precision -- according to Corollary~\ref{cor:mainth_omas}, while PDC converges with a bounded error.

\subsection{Learning over open networks}

In this section we apply the \alg to a classification problem, characterized by the (static) local costs
$$
    f^i_k:= f_i(x) = \frac{1}{m_i} \sum_{j = 1}^{m_i} \log\left( 1 + \exp\left( - b_{i,j} a_{i,j}^\top x \right) \right) + \frac{\epsilon}{2} \norm{x}^2
$$
where $a_{i,j} \in \rea^n$ and $b_{i,j} \in \{ -1, 1 \}$ are the pairs of feature vector and label, randomly generated with \texttt{scikit-learn}, and $\epsilon = 0.05$.
In the following sections we test \alg on several open networks, with different models of arriving/remaining/departing events. We also test some variants of \alg which differ in how arriving agents initialize their local states.

\begin{figure}[!b]
    \centering
    \includegraphics[width=0.95\linewidth]{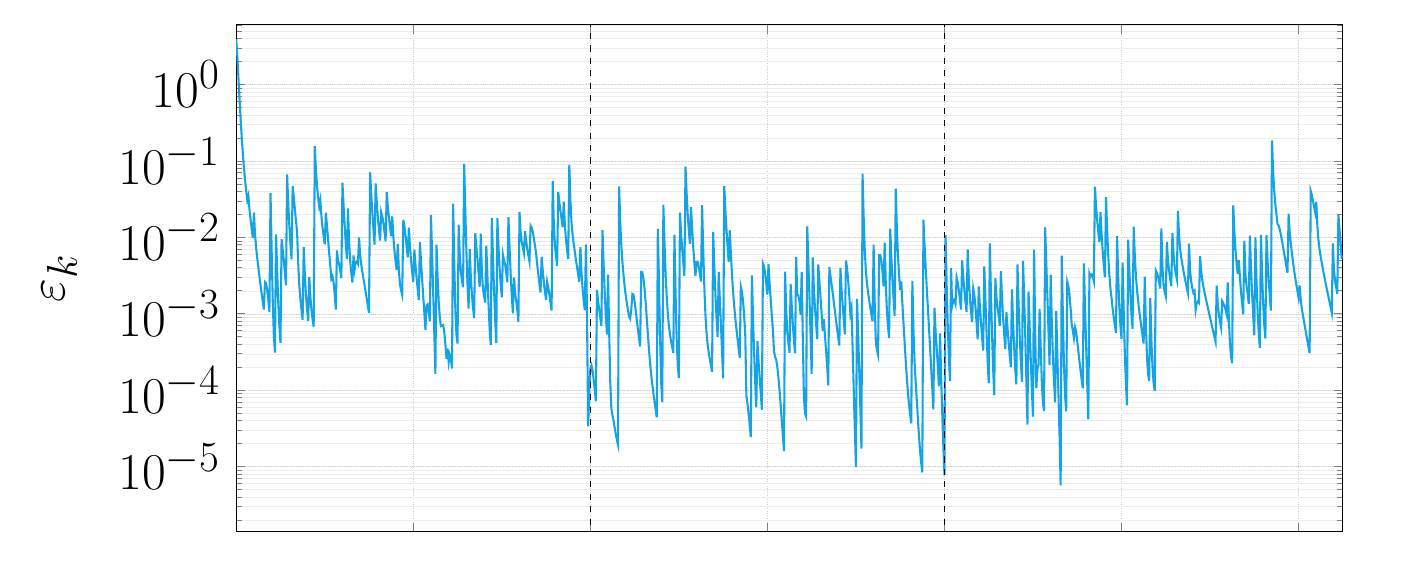}
    \includegraphics[width=0.98\linewidth]{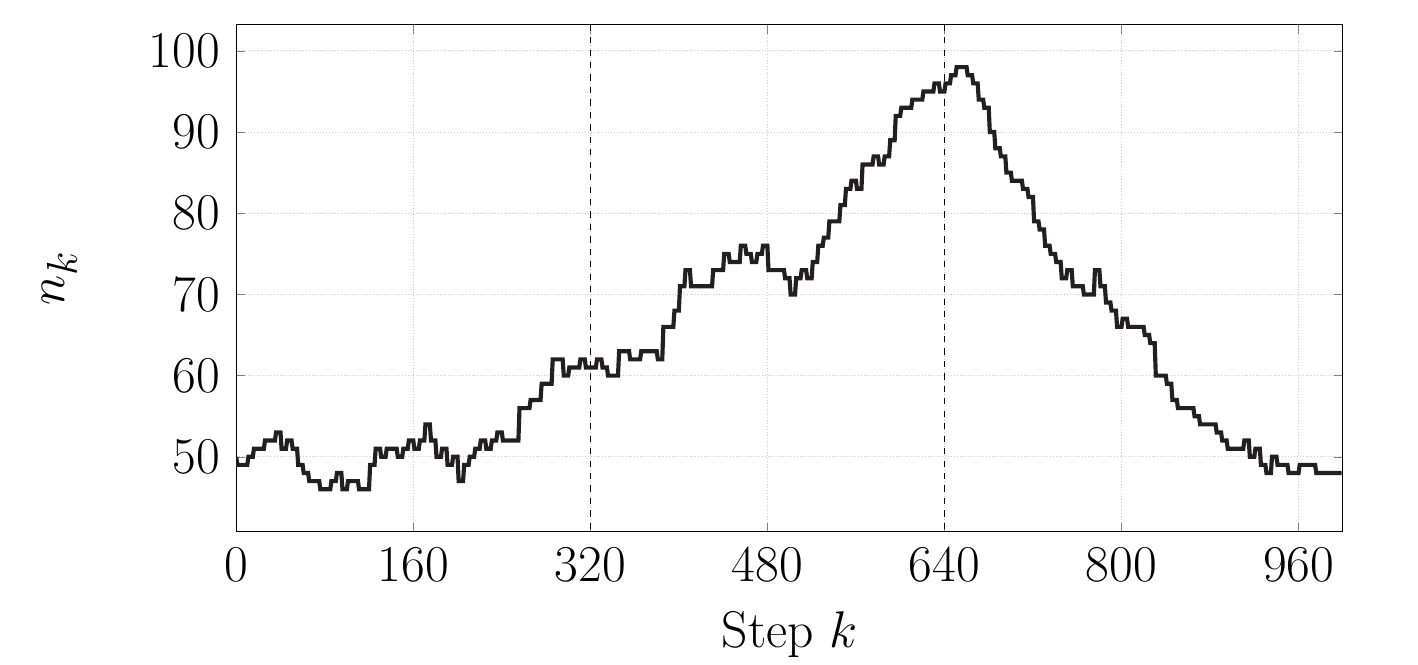}
    
    \caption{\alg applied to an open network with three modes: $\lambda\join = \lambda\leave$, $\lambda\join = 2 \lambda\leave$, $\lambda\join = 0.5 \lambda\leave$.}
    \label{fig:changing_mode}
\end{figure}

We remark that the local costs do not have a closed form $\prox$ for the selected logistic loss; rather, the agents approximate it with accelerated gradient descent, up to precision of $10^{-10}$. 
For the same reason, it becomes quite impractical to numerically evaluate the distance of the agents estimations from the global optimal solution.
Thus, in all following simulations, we use instead the following metric as a proxy of the algorithm's convergence to the solution of~\eqref{eq:optprob}:
\begin{equation}\label{eq:proxymetric}
    \varepsilon_k := \norm{\sum_{i \in\V_k} \nabla f_i(\bar{y}_k)}^2,\quad\text{with} \quad \bar{y}_k = \frac{1}{n_k} \sum_{i \in\V_k} y_k^i
\end{equation}
The above metric is zero if all the $y_k^i$, $i \in \V_k$, have converged to the optimal solution.

\subsubsection{Open network with Poisson arrivals/departures}
We start by applying \alg to an open network in which the arrival and departure events occur according to the Poisson distributions $\text{Pois}(\lambda\join)$ and $\text{Pois}(\lambda\leave)$, respectively -- resulting in $\lambda\join$ and $\lambda\leave$ arrivals and departures in mean. Additionally, arriving agents are connected to a number of remaining agents equal to the average degree in the network, and the network starts with $n_0 = 50$ agents.
In Figure~\ref{fig:changing_mode} we report the evolution of $\varepsilon_k$ and the number of agents $n_k$ over the course of a simulation where the network is characterized by three modes:
\begingroup
$$
[\lambda\join_k, \lambda\leave_k]=    
\begin{cases}
[1,\: 1] & \text{if } k \leq 320\\
[1,\: 0.5] & \text{if } k\in (320, 640]\\
[0.5,\: 1] & \text{if } k> 640]
\end{cases}.
$$
\endgroup
We can see that, after the initial transient, $\varepsilon_k$ remains upper bounded, as predicted by our theoretical results. This is the case also when $\lambda\join = 2 \lambda\leave$ and $\lambda\join = 0.5 \lambda\leave$, which result in an increasing and decreasing (in mean) $n_k$, respectively. This observation is especially important, as it showcases the resilience of \alg to wide changes in the network.

The values of $\lambda\join$ and $\lambda\leave$ of course affect that magnitude of the fluctuations in $n_k$ that happen over the course of the simulation. To explore how this in turn impacts $\varepsilon_k$, we run a set of simulations with $\lambda\join = \lambda\leave$ and different choices of $\lambda\join \in \{ 0.1, 1, 10, 100 \}$. The results are reported in Figure~\ref{fig:changing_lambdas}.
We notice that the larger the mean of the arrival/departure events, the larger $\varepsilon_k$ is, due to the wider fluctuations in the number of agents.
This is further verified by the results of Table~\ref{tab:poisson_mean}, which report the minimum, maximum, mean and standard deviation of $\varepsilon_k$ in the second half of the simulation (to exclude transient behaviors due to the initialization). The results are averaged over $10$ Monte Carlo iterations.
The results confirm that the more arrival/departure events, the larger $\varepsilon_k$, in line with our theoretical results. We remark that the algorithm does not diverge even in the challenging scenario $\lambda = 100$.

\begin{figure}[!b]
    \centering
    \includegraphics[width=0.95\linewidth]{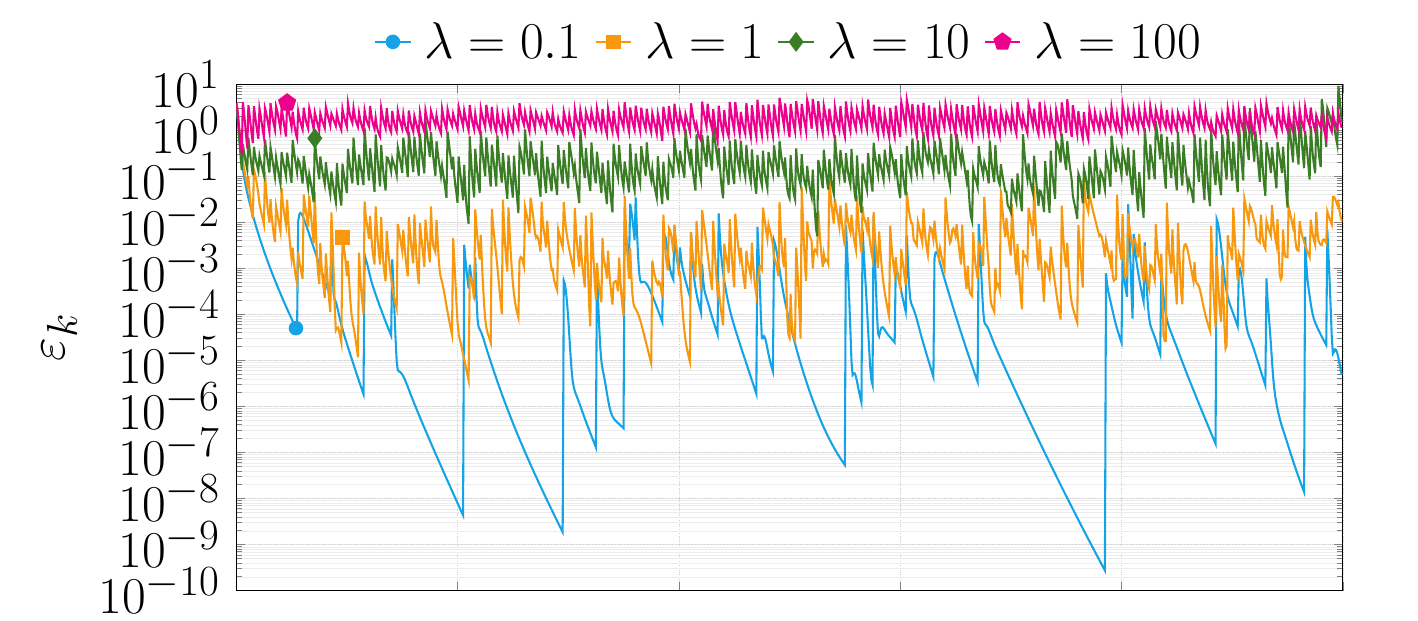}

    \includegraphics[width=0.98\linewidth]{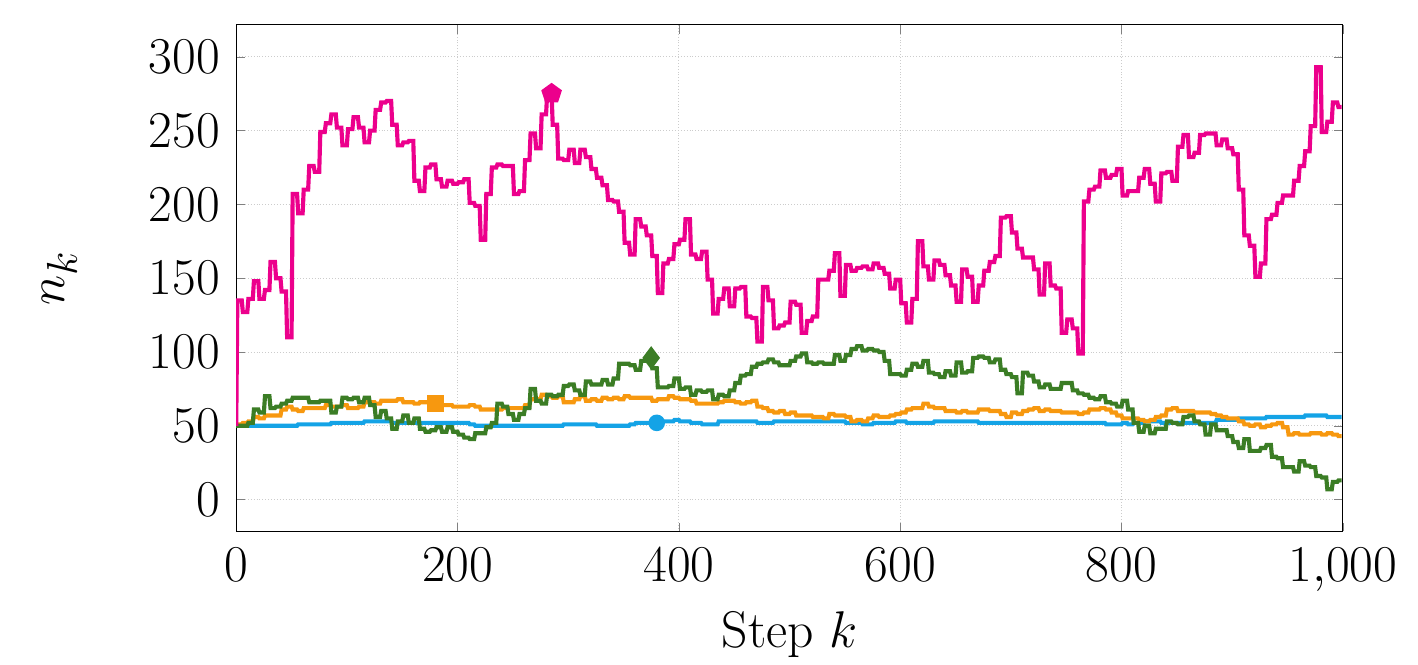}
    
    \caption{\alg applied to an open network with $\lambda\join = \lambda\leave=\lambda \in \{ 0.1, 1, 10, 100 \}$.}
    \label{fig:changing_lambdas}
\end{figure}

\subsubsection{Open network with decaying Poisson}
The previous section tested \alg in scenarios where arrival and departure events keep happening with a constant probability. In this section, we test \alg in a scenario where the arrival and departure events become rarer as time goes on, that is, they have distributions $\text{Pois}(\lambda \delta^{k/5})$ and $\text{Pois}(\lambda \delta^{k/5})$, with $\lambda = 5$ and $\delta = 0.9583$. \vspace{-0.5em}
\begin{table}[!h]
    \centering
    \caption{Performance of \alg in terms of $\varepsilon_k$ in \eqref{eq:proxymetric} for different network sizes, and only replacement events.}
    \begin{tabular}{c|ccc}
    $\lambda$ & Min & Mean $\pm$ Std & Max \\
    \hline
    $0.1$ & $6.163 \times 10^{-5}$ & $( 6.765 \pm 7.988) \times 10^{-4}$ & $4.192 \times 10^{-3}$ \\
    $1$ & $3.178 \times 10^{-3}$ & $( 1.701 \pm 1.012) \times 10^{-2}$ & $5.655 \times 10^{-2}$ \\
    $10$ & $6.550 \times 10^{-2}$ & $( 1.765 \pm 0.968) \times 10^{-1}$ & $5.698 \times 10^{-1}$ \\
    $100$ & $9.282 \times 10^{-1}$ & $1.692 \pm 0.624$ & $3.046$ \\
    \end{tabular}
    \label{tab:poisson_mean}\vspace{-1em}
\end{table}
This models a scenario where at the initial learning stage there is high churn rate, but progressively the agents decide whether to participate or not, and the network settles.

In Figure~\ref{fig:decaying} we report the evolution of $\varepsilon_k$ and of the number of agents in the network.
We notice that, similarly to the results of the previous section, while the arrival/departure events are frequent, \alg achieves a bounded error. But as the probabilities decay, the network settles on an increasingly static structure, and the algorithm achieves exact convergence around iteration $800$ (up to numerical precision). The probabilities of arrival/departure are not zero though, and we see that an agent enters the network at $\sim 850$, causing a transient in the error, which then converges to zero again.

\begin{figure}[!b]
    \centering
    \includegraphics[width=0.98\linewidth]{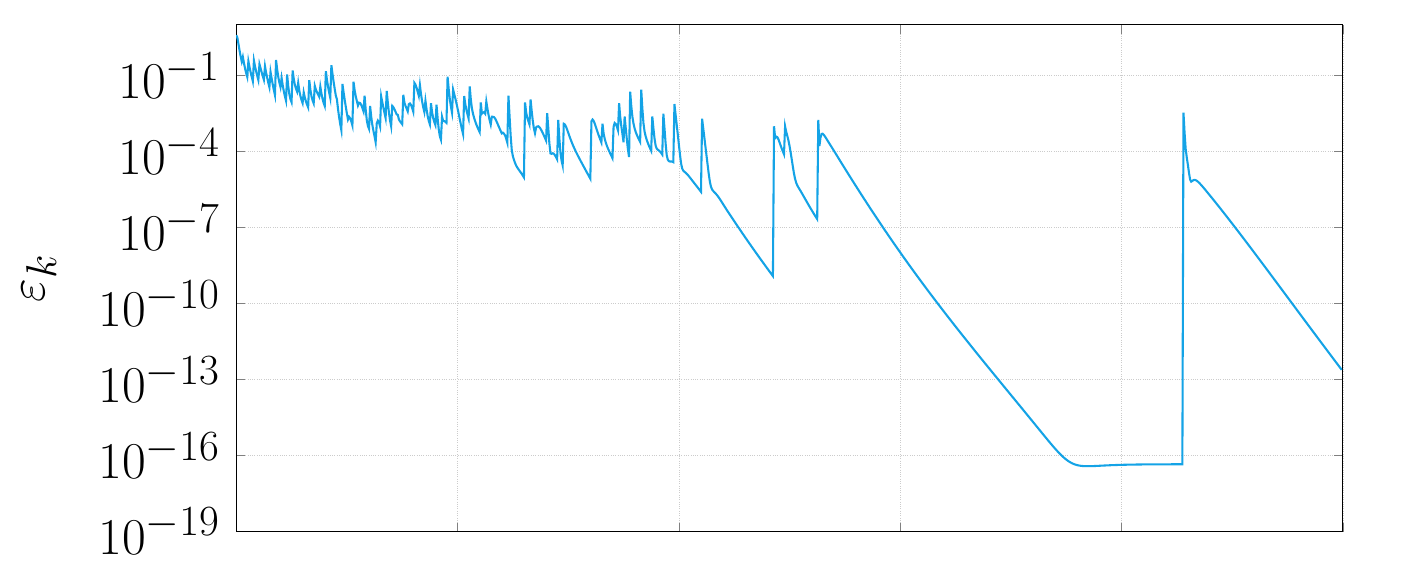}

    \includegraphics[width=0.95\linewidth]{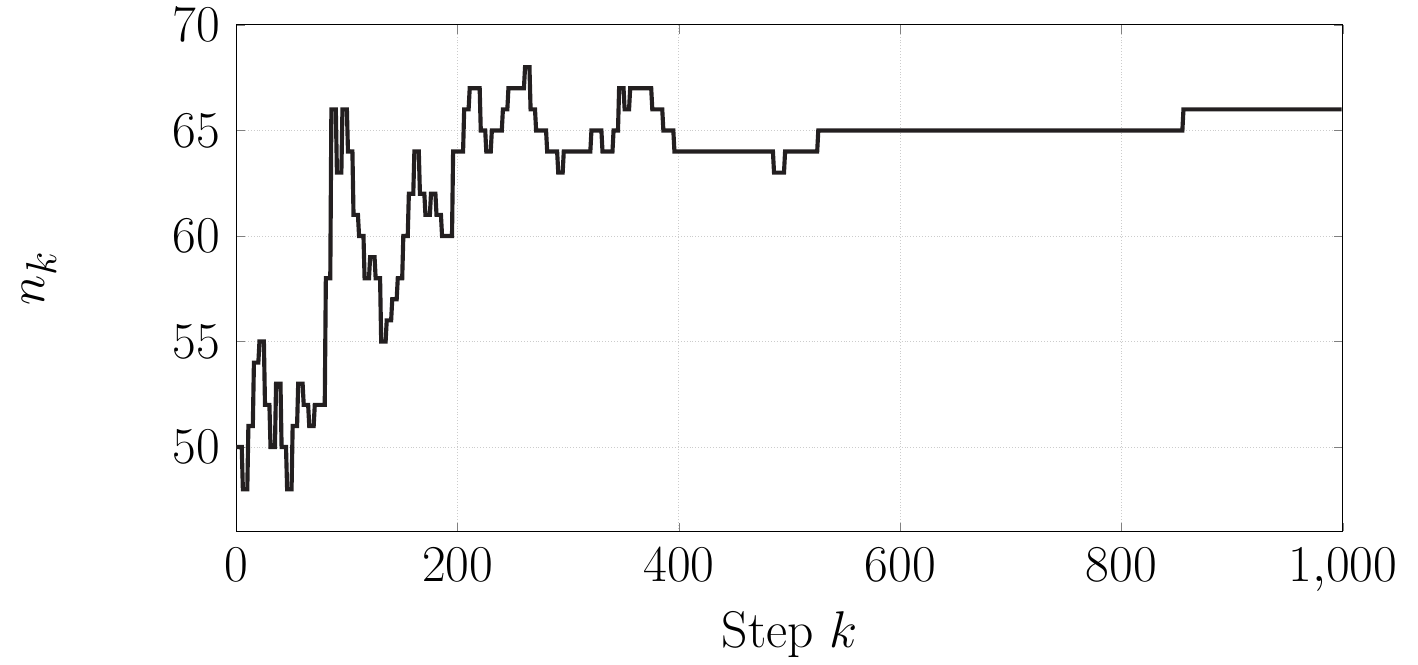}
    
    \caption{\alg applied to an open network with decaying arrival/departure events probability.}
    \label{fig:decaying}
\end{figure}

\subsubsection{Open network with replacement}
The open network models used in the previous sections allowed for the number of agents $n_k$ to vary over time. In this section we consider a network with a fixed number of agents $n$, but allow a number of them -- drawn from $\text{Pois}(\lambda)$, $\lambda = 1$ -- to be replaced throughout the simulation.

Table~\ref{tab:replacement} reports the minimum, maximum, mean and standard deviation of $\varepsilon_k$ in the second half of the simulation, with $n \in \{ 50, 100, 500 \}$. The results are averaged over $10$ Monte Carlo iterations.
We notice that the higher the number of agents, the smaller the value of $\varepsilon_k$. Indeed, in larger networks the impact of replacing a few nodes ($1$ on average) is lesser, as the large number of cost functions in~\eqref{eq:optprob} ensure smaller sensitivity to individual changes in the agents.
However, $\varepsilon_k$ does not converge to zero, as changes in the network still cause transients similar to that at $\sim 850$ of Figure~\ref{fig:decaying}.

\begin{table}[!h]
    \centering
    \caption{Performance of \alg in terms of $\varepsilon_k$ in \eqref{eq:proxymetric} for different network sizes, and only replacement events.}
    \begin{tabular}{c|ccc}
    $n$ & Min & Mean $\pm$ Std & Max \\
    \hline
    $50$ & $1.661 \times 10^{-3}$ & $( 1.253 \pm 0.980) \times 10^{-2}$ & $6.159 \times 10^{-2}$ \\
    $100$ & $3.997 \times 10^{-4}$ & $( 3.685 \pm 4.535) \times 10^{-3}$ & $3.382 \times 10^{-2}$ \\
    $500$ & $7.493 \times 10^{-5}$ & $( 4.676 \pm 3.045) \times 10^{-4}$ & $1.817 \times 10^{-3}$ \\
    \end{tabular}
    \label{tab:replacement}\vspace{-1.5em}
\end{table}

\subsubsection{Initialization in \alg}
The agents running \alg use their local optimum as an initialization when they join the network (or when they connect to a new neighbor). However, in practice the agents may not have access to a locally optimal model (especially if their cost function changes over time). Thus, in this section we test the performance of \alg with different local initializations: (a) the local optimum $x_k^{ij} = \rho y^{i,\star}$; (b) the zero vector $x_k^{ij} = 0$; (c) the average state of $i$'s neighbors that were already part of the network $x_k^{ij} = \avg\left( \{ y_{k-1}^j \}_{j \in \N_k^i \cup \R_k} \right)$. The choice of (c) enacts a ``knowledge transfer'' between remaining and arriving agents; this is similar to the sharing of subgradients in \cite{hsieh_optimization_2021}.

Table~\ref{tab:initialization} reports the minimum, maximum, mean and standard deviation of $\varepsilon_k$ (in the second half of the simulation) with the different initializations. The results are averaged over $10$ Monte Carlo iterations.
First of all, we remark that the na\"ive choice (b) leads to the worst performance overall, due to the fact that it causes the largest transient effect. Indeed, the zero vector is (on average) much farther from the fixed point than (a) and (c).
On the other hand, these two better informed initializations have very close performance, with (c) being slightly worse. As mentioned above, (a) may not be accessible due to the computational cost of computing the local optimum. Thus (c) presents a less computationally expensive alternative which relies on communications rather than local computation.

\begin{table}[!ht]
    \centering
    \caption{Performance of \alg in terms of $\varepsilon_k$ in \eqref{eq:proxymetric} for different network sizes, and only replacement events.}
    \begin{tabular}{c|ccc}
    Init. & Min & Mean $\pm$ Std & Max \\
    \hline
    (a) & $3.178 \times 10^{-3}$ & $( 1.701 \pm 1.012) \times 10^{-2}$ & $5.655 \times 10^{-2}$ \\
    (b) & $2.665 \times 10^{-3}$ & $(2.214 \pm 3.854) \times 10^{-1}$ & $1.692$ \\
    (c) & $3.212 \times 10^{-3}$ & $(2.134 \pm 1.630) \times 10^{-2}$ & $7.773 \times 10^{-2}$ \\
    \end{tabular}
    \label{tab:initialization}
\end{table}

\section{Conclusions}\label{sec:conclusion}
This article presents the \alg algorithm to solve distributed optimization and learning problems in networks where agents may join or leave during the execution of the algorithm.
The stability and performance of \alg are discussed in the light of the newly introduced \virg{\textit{open operator theory}}, which are corroborated through extensive simulations in the to solve the dynamic consensus problems on different metrics and classification problems through logistic regressions.
The superiority of our approach with respect to the state-of-the-art has been discussed both in terms of working assumptions and performance, as detailed in Tables \ref{tab:comparison_optimization}-\ref{tab:comparison_consensus}.

Many interesting future research directions originates from this manuscript, which mainly reside in providing new the technical tools for \virg{\textit{open operator theory}} to correctly describe and analyze more complex and realistic scenarios, such as asynchronous communications among the agents, unreliable and limited communications to deal with packet losses and quantization limitation, inexact local computations, and so on. 

\newpage

\printbibliography

\section*{Appendix:\\ Proofs of Lemmas~\ref{lem:DOOT_TSI}-\ref{lem:DOOT_BAR} and Theorem~\ref{th:mainth_omas_opt}}
\setcounter{subsection}{0}
The standard iteration of \alg is that of DOT-ADMM proposed by the same authors in~\cite{bastianello2024robust}, which is given by (cfr.~\cite[Eq. (11)]{bastianello2024robust})
\begingroup
\medmuskip=0mu
\thinmuskip=0mu
\thickmuskip=0mu
\begin{equation*}\label{eq:compactform}
\begin{aligned}
x_{k} &= \Fs_{k}(x_{k-1}) = \left[ (1 - \alpha) I - \alpha P_{k-1} \right]  x_{k-1} + 2 \alpha \rho P_{k-1} A_{k-1}  y_{k-1}\\
y_{k} &= \Ps_k(x_k) = \prox_{f_k}^{1/\rho \eta_k}(D_{k} A_{k}^\top  x_{k})
\end{aligned}
\end{equation*}
\endgroup
where:
\begin{itemize}
    \item the operator $\prox_{f_k}^{1/\rho \eta_k} : \rea^{\J_k} \to \rea^{\J_k}$ with ${\J_k=\{(i,\ell):i\in\V_k\text{ and } \ell=1,\ldots,p\}}$ applies block-wise the proximal of the local costs $f^i_k$;
    \item the matrix ${A_k = \Lambda \otimes I_p\in\{0,1\}^{\I_k \times \J_k}}$ is given\footnote{The symbol $\otimes$ denotes the Kronecker product.} by ${\Lambda\in\{0,1\}^{\E_k\times \V_k}}$ defined block-wise for $i\in\V_k$ and $j\in\N^i_k$ by the matrices $\Lambda^{ij}=[e_i,e_j]^\top$ for $j>i$ where $e_\ell\in\{0,1\}^{\V_k}$ denotes a canonical vector;
    \item the matrix ${D_k\in\rea^{\J_k\times \J_k}}$ is given by ${D_k = \blkdiag\{ (\rho\eta^i_k)^{-1} I_p \}_{i = 1}^n}$;
    \item the matrix ${P_k\in \{0,1\}^{\I_k \times \I_k} }$ is a squared block-diagonal matrix given by $P_k=I_{\xi_k/2}\otimes (\Pi \otimes I_p)$ with $\Pi=[0\ 1; 1\ 0]$.
\end{itemize}

\subsection{Proof of Lemma~\ref{lem:DOOT_TSI}}
  Let $\hat{x}_{k}$ be a fixed point of standard iteration, namely
    $$
    \hat{x}_{k}=\Fs_{k+1}(\hat{x}_{k}).
    $$
    Since the updates of DOT-ADMM are the result of the application of the Peaceman-Rachford operator to the dual of the distributed version of the problem in~\eqref{eq:optprob}, then the output variable at a fixed point is equal to $(\bone_{n_k}\otimes y^\star_{k})$ where $y^\star_{k}\in \Y^\star_k$ is a solution to problem in~\eqref{eq:optprob} (cfr.~\cite[Therorem 26.11]{Ryu16}), namely,
    $$
    (\bone_{n_k}\otimes y^\star_{k}) = \prox_{f_k}^{1/\rho \eta_k}(D_k A_k^\top  \hat{x}_{k}).
    $$
    Consequently, the fixed point $\hat{x}_{k}$ of the standard iteration satisfies
    $$
    (I+{P_{k}})\hat{x}_{k} = 2\rho P_{k}A_{k} (\bone_{n_k}\otimes y^\star_{k}),
    $$
    thus completing the first part of the proof. 
    Now, we need to prove that the following is bounded from above,
    $$
    \begin{aligned}
        \dsh(\hat{\X}_{k},\hat{\X}_{k-1}) & = \sup_{z\in\rea^{\I_k}} d(\proj(z,\hat{\X}_{k}),\proj(z,\hat{\X}_{k-1})).
    \end{aligned}
    $$
    Let us denote by $\hat{z}_k$ the projection of $z\in\rea^{\I_k}$ onto $\hat{\X}_k\subseteq \rea^{\I_k}$, namely $\hat{z}_k = \proj(z,\hat{\X}_k)$, then by~\cite[Section 6.2.2]{Parikh14} it holds
    $$
    \hat{z}_k = z - (I+P_k)^\dag((I+P_k)z-2\rho P_kA_k(\bone_{n_k}\otimes y^\star_{k})).
    $$
    Since $I+P_k=I_{\xi_k/2}\otimes (J \otimes I_p)$ where $J=(\Pi+I_2)$ is a matrix with all ones with pseudoinverse $J^\dag=\frac{1}{4}J$, we can decompose $\hat{z}_k$ into vectors $\hat{z}^{ij}_k\in\rea^{2p}$ where $i\in \V_k$, $j\in\N^i_k$ such that $j>i$, given by
    $$
    \hat{z}^{ij}_k = z^{ij} - \frac{1}{2}(J \otimes I_p)(z^{ij} - \rho (\Lambda^{ij}\otimes I_p) (\bone_{n_k}\otimes y^\star_{k})).
    $$
    Similarly, the components of $\proj(z,\hat{\X}_{k-1})$ belonging to $\I_k$ are given by
    $$
    \hat{z}^{ij}_{k-1} = z^{ij} - \frac{1}{2}(J \otimes I_p)(z^{ij} - \rho (\Lambda^{ij}\otimes I_p) (\bone_{n_k}\otimes y^\star_{k-1}).
    $$
    Thus, $d(\proj(z,\hat{\X}_{k}),\proj(z,\hat{\X}_{k-1}))=$
    \begingroup
    \medmuskip=0mu
    \thinmuskip=0mu
    \thickmuskip=0mu
    \allowdisplaybreaks
    \begin{align*}
        & = \sqrt{\sum_{i\in\Vr_k}\sum_{i<j\in \R^i_k}\norm{\hat{z}^{ij}_k-\hat{z}^{ij}_{k-1}}^2 }\\
        &= \sqrt{\sum_{i\in\Vr_k}\sum_{i<j\in \R^i_k} \frac{\rho^2}{4}\norm{(J \otimes I_p) (\Lambda^{ij}\otimes I_p) (\bone_{n_k}\otimes(y^\star_{k}-y^\star_{k-1}))}^2 }\\
        &\overset{(i)}{\leq} \sqrt{\sum_{i\in\Vr_k}\sum_{i<j\in \R^i_k} \frac{\rho^2}{4}\norm{(J \otimes I_p) }^2\norm{(\Lambda^{ij}\otimes I_p) (\bone_{n_k}\otimes(y^\star_{k}-y^\star_{k-1}))}^2 }\\
        &= \sqrt{\sum_{i\in\Vr_k}\sum_{i<j\in \R^i_k} \frac{\rho^2}{4}\norm{(J \otimes I_p) }^2\norm{\begin{bmatrix}
            y^\star_{k}-y^\star_{k-1} \\ y^\star_{k}-y^\star_{k-1}
        \end{bmatrix}}^2 }\\
        & \overset{(ii)}{\leq}  \sqrt{\sum_{i\in\Vr_k}\sum_{i<j\in \R^i_k} \frac{\rho^2\sigma^2}{2}\norm{J \otimes I_p}^2  }\overset{(iii)}{=}  \sqrt{\sum_{i\in\Vr_k}\sum_{i<j\in \R^i_k} \frac{\rho^2\sigma^2}{2}\norm{J}^2\norm{I_p}^2  } \\
        & \overset{(iv)}{=}  \sqrt{\sum_{i\in\Vr_k}\sum_{i<j\in \R^i_k} \frac{\rho^2\sigma^2}{2}\norm{J}^2} \overset{(v)}{\leq}  \sqrt{\sum_{i\in\Vr_k}\sum_{i<j\in \R^i_k} \frac{\rho^2\sigma^2}{2}\norm{J}_1\norminf{J} } \\
        & \overset{(vi)}{=} \sqrt{\sum_{i\in\Vr_k}\sum_{i<j\in \R^i_k} \frac{\rho^2\sigma^2}{2}\cdot 2 \cdot 2 }  = \rho \sigma\sqrt{\sum_{i\in\Vr_k}\sum_{i<j\in \R^i_k} 2} \overset{(vii)}{\leq} \rho\sigma \sqrt{\abs{\R_k}} 
    \end{align*}
    \endgroup
    where $(i)$ follows by sub-multiplicativity of the norm; $(ii)$ holds by Assumption~\ref{as:all}(iii); $(iii)$ follows by~\cite[Theorem 8]{lancaster1972norms}; $(iv)$ holds because the $2$-norm of an identity matrix is equal to $1$; $(v)$ follows by the Riesz-Thorin Theorem \cite[ Theorem 4.3.1]{Serre12}; $(vi)$ follows by the fact that $\norm{M}_1$ and $\norm{M}_\infty$ are, respectively, the row- and column- sum of the absolute values of the matrix $M$, and from the fact that $J$ has exactly $2$ ones in each row and each column while $\Lambda^{ij}$ has exactly $1$ one in each row and each column; $(vii)$ holds because the number of remaining channels at time $k$ is equal to $\abs{\R_k}$. Therefore, the TSI is bounded with $B=\rho\sigma/\sqrt{p}$, completing the proof.

\subsection{Proof of Lemma~\ref{lem:DOOT_BAR}}
   \alg requires that state components $x^{ij}_k$ of all arriving agents $i\in\Va_k$ and new state components of the remaining agents $i\in\Vr_k$, $j\in \A^i_k$ are initialized to the minimizer $y^{i,\star}_k$ of the local cost $f^i_k$, scaled by the penalty parameter $\rho>0$, namely,
    $$
    x^{ij}_k = \rho y^{i,\star}_k,\text{ where }  y^{i,\star}_k \in \Y^{i,\star}_k = \left\{y:f^i_k(y) =  \min_{z\in\rea^{\P}} f^i_k(z)\right\}.
    $$
    Let $x_k^{\textsc{A}}\in \rea^{\A_k}$ be the vector stacking the components of all arriving labels, and denote $\hat{x}^{\textsc{A}}_k = \proj(x^{\textsc{A}}_k,\hat{\X}_{k})$. Further, let us decompose $x^{\textsc{A}}_k,\hat{x}^{\textsc{A}}_k$ into vectors $x^{,ij}_k,\hat{x}^{ij}_k\in\rea^{\P}$ where $i\in \V_k$, $j\in\A^i_k$.
    Then, by Lemma~\ref{lem:DOOT_TSI} it holds that:
    \begingroup
    \medmuskip=0mu
    \thinmuskip=0mu
    \thickmuskip=0mu
    \allowdisplaybreaks
    \begin{align*}
        &d(x^{\textsc{A}}_k,\hat{\X}_{k})  = d(x^{\textsc{A}}_k,\proj(x^{\textsc{A}}_k,\hat{\X}_{k})) = d(x^{\textsc{A}}_k,\hat{x}^{\textsc{A}}_k)\\
        & = \sqrt{\sum_{i\in\V_k} \sum_{i<j\in\A^i_k} \norm{\begin{bmatrix}
            x^{ij}_k\\x^{ji}_k
        \end{bmatrix}-\begin{bmatrix}
            \hat{x}^{ij}_k\\\hat{x}^{ji}_k
        \end{bmatrix}}^2}\\
        &  \overset{(1)}{\leq} \sqrt{\sum_{i\in\V_k} \sum_{i<j\in\A^i_k} \norm{\frac{1}{2}(J \otimes I_p)(\begin{bmatrix}
            x^{ij}_k\\x^{ji}_k
        \end{bmatrix} - \rho (\Lambda^{ij}\otimes I_p) (\bone_{n_k}\otimes y^\star_{k}))}^2} \\ 
        & = \sqrt{\sum_{i\in\V_k} \sum_{i<j\in\A^i_k} \norm{\frac{\rho}{2}(J \otimes I_p)\left(\begin{bmatrix}
        y^{i,\star}_k \\ y^{j,\star}_k
        \end{bmatrix} - (\Lambda^{ij}\otimes I_p) (\bone_{n_k}\otimes y^\star_{k})\right)}^2}\\
        & = \sqrt{\frac{\rho^2}{4}\sum_{i\in\V_k} \sum_{i<j\in\A^i_k}  \norm{(J \otimes I_p)\left(\begin{bmatrix}
        y^{i,\star}_k \\ y^{j,\star}_k
        \end{bmatrix} - \begin{bmatrix}
            y^\star_{k} \\ y^\star_{k}
        \end{bmatrix}\right)}^2}\\
        & \overset{(2)}{\leq} \sqrt{\frac{\rho^2}{4}\norm{(J \otimes I_p)}^2\sum_{i\in\V_k} \sum_{i<j\in\A^i_k}   \norm{\begin{bmatrix}
        y^{i,\star}_k - y^\star_{k}\\ y^{j,\star}_k - y^\star_{k}
        \end{bmatrix}}^2}\\
        & \overset{(3)}{\leq} \sqrt{\rho^2 \sum_{i\in\V_k} \sum_{i<j\in\A^i_k}  \norm{\begin{bmatrix}
        y^{i,\star}_k - y^\star_{k}\\ y^{j,\star}_k - y^\star_{k}
        \end{bmatrix}}^2} \overset{(4)}{\leq} \sqrt{\rho^2 \sum_{i\in\V_k} \sum_{i<j\in\A^i_k}  2\omega^2}\\
        & = \rho \omega \sqrt{\sum_{i\in\V_k} \sum_{i<j\in\A^i_k} 2 } \overset{(6)}{\leq} \rho\omega \sqrt{\abs{\A_k}},
    \end{align*}
    \endgroup
    where $(1)$ hold by Lemma~\ref{lem:DOOT_TSI}; $(2)$ holds by triangle inequality; $(3)$ holds as explained in steps $(3)-(6)$ at the end of the proof of Lemma~\ref{lem:DOOT_TSI}; $(4)$ holds by Assumption~\ref{as:all}(iv); 
    $(5)$ holds because the number of arriving channels at time $k$ is equal to $\abs{\A_k}$. Therefore, the arrival process is bounded with $H=\frac{\rho\omega}{\sqrt{p}}$, completing the proof.

\subsection{Proof of Theorem~\ref{th:mainth_omas_opt}}
We compute an upper bound to the distance of the estimation vector $y_k = [y^i_k,\cdots,y^{n_k}_k]\in\rea^{n_k}$ of the whole network from the consensus state on the solutions, which we call ${\mathcal{C}^\star_k := \{\bone_{n_k}\otimes y^\star_{k} \ \mid \ y^\star_{k}\in\Y^\star_k\}}$.
By~\cite[Proposition~3]{Bastianello21}, for each point $ \hat{x}_k \in \hat{\X}_k$ in the TSI there is a solution $y^\star_k\in\Y^\star_k$ to the problem in~\eqref{eq:optprob} such that $(\bone_{n_k}\otimes y^\star_{k})=\Ps_{k}(\hat{x}_k)$ for all $i\in\V_k$.
Thus we can write
\begingroup
\medmuskip=0mu
\thinmuskip=0mu
\thickmuskip=0mu
\begin{align*}
    d(y_k,\C^\star_k) &=  \inf_{ z \in \C^\star_k} \norm{ y_k -  z}  \overset{(i)}{\leq} \norm{ y_k -  (\bone_{n_k}\otimes y^\star_{k})} = \norm{\Ps_{k}( x_{k}) - \Ps_{k}(\hat{x}_{k})} \\
    &\overset{(ii)}{\leq} \norm{\prox_{f_k}^{1/\rho \eta_k}(D_k A_{k}^\top  x_{k}) - \prox_{f_k}^{1/\rho \eta_k}(D_kA_{k}^\top  \hat{x}_{k})} \\
    &\overset{(iii)}{\leq} \norm{D_kA_{k}^\top ( x_{k} -  \hat{x}_{k} ) }
    \overset{(iv)}{\leq} \frac{1}{\rho} \norm{\rho D_kA_{k}^\top} d(x_{k},\hat{\X}_k) \\
    & \overset{(v)}{=} \frac{1}{\rho\sqrt{\min_{i\in\V_k}\eta^i_k}} d(x_{k},\hat{\X}_k) \leq \frac{1}{\rho} d(x_{k},\hat{\X}_k)
\end{align*}
\endgroup
where $(i)$ holds since $ y_k^\star\in\Y_k^\star$ and because of the consensus constraint; $(ii)$ holds with by definition of $\Ps_k$; $(iii)$ follows by the non-expansiveness of the proximal, and $(iv)$ holds by choosing
$ \hat{x}_k =\arginf_{ y \in \hat{\X}_k} \norm{x_{k} -  y}$;
$(v)$ holds because matrix $\rho D_kA_{k}^\top$ is row-stochastic and its columns sums up to $1/\eta^i_k$.

This means that the linear convergence of $ y_k$ to a neighborhood of $\C^\star_k$ is implied by that of $x_k$ to a neighborhood of $\hat{\X}_k$, which follows from Theorem~\ref{th:mainth_omas} and Lemmas~\ref{lem:DOOT_TSI}-\ref{lem:DOOT_BAR}.
Moreover, the convergence radius becomes time-varying and depending on the number of agents because $y^i_k$ and $x_k$ have different dimensions, indeed, 
\begingroup
$$
\begin{aligned}
        \limsup_{k\rightarrow \infty} \ \frac{d(y_k,\C^\star_k)}{\sqrt{pn_k}} & \leq \limsup_{k\rightarrow \infty} \ \frac{d(x_{k},\hat{\X}_k)}{\rho\sqrt{pn_k}} \leq \frac{\sqrt{p\xi_k}}{\rho\sqrt{pn_k}}\frac{B+H}{1-\frac{\gamma}{\beta}}\\
        &\leq \frac{n_k}{\rho\sqrt{n_k}}\frac{B+H}{1-\frac{\gamma}{\beta}} = \frac{\sqrt{n_k}}{\rho}\frac{B+H}{1-\frac{\gamma}{\beta}}
\end{aligned}
$$
\endgroup
and, using $B=\rho\sigma$ in~\eqref{eq:TSI_bound}, $H=\rho\omega$ in~\eqref{eq:TSI_boundarr}, yields the thesis.

\end{document}